\theoremstyle{plain}
\newtheorem{thm}{Theorem}
\newtheorem{lem}[thm]{Lemma}
\newtheorem{prop}[thm]{Proposition}
\newtheorem{cor}[thm]{Corollary}
\theoremstyle{definition}
\newtheorem{defn}{Definition}[section]
\newtheorem{exmp}[defn]{Example}
\newtheorem*{rem*}{Remark}
\newtheorem{rem}[defn]{Remark}
\newcommand{\R}{\mathbb{R}}
\newcommand{\Z}{\mathbb{Z}}
\newcommand{\DC}{DC}
\DeclareMathOperator{\dist}{dist}
\DeclareMathOperator{\diam}{diam}
\DeclareMathOperator{\WLSC}{WLSC}
\DeclareMathOperator{\WUSC}{WUSC}
\numberwithin{equation}{section}
\def\kint_#1{\mathchoice%
          {\mathop{\kern 0.2em\vrule width 0.6em height 0.69678ex depth -0.58065ex
                  \kern -0.8em \intop}\nolimits_{\kern -0.4em#1}}%
          {\mathop{\kern 0.1em\vrule width 0.5em height 0.69678ex depth -0.60387ex
                  \kern -0.6em \intop}\nolimits_{#1}}%
          {\mathop{\kern 0.1em\vrule width 0.5em height 0.69678ex depth -0.60387ex
                  \kern -0.6em \intop}\nolimits_{#1}}%
          {\mathop{\kern 0.1em\vrule width 0.5em height 0.69678ex depth -0.60387ex
                  \kern -0.6em \intop}\nolimits_{#1}}}
\numberwithin{defn}{subsection}
\title[Fractional Hardy inequalities]{A framework for fractional Hardy inequalities}
\keywords{fractional Hardy inequality, fractional Laplacian, 
regularly varying function, Assouad dimension, uniform fatness, irregular domain}
\subjclass[2010]{26D15, 46E35, 31C25, 26A12}
\author[B{.} Dyda]{Bart{\l}omiej Dyda}
\address[B.D.]{Faculty of Mathematics\\ University of Bielefeld\\
Postfach 10 01 31,
D-33501 Bielefeld, Germany\\
\and
 Institute of Mathematics and Computer Science\\ Wroc{\l}aw University of Technology\\
Wybrze\.ze Wyspia\'nskiego 27,
50-370 Wroc{\l}aw, Poland
}
\email{dyda@math.uni-bielefeld.de  bdyda@pwr.wroc.pl}
\author[A.V. V\"ah\"akangas]{Antti V. V\"ah\"akangas}
\address[A.V.V.]{Department of Mathematics and Statistics,
P.O. Box 68, FI-00014 University of Helsinki, Finland} \email{antti.vahakangas@helsinki.fi}
\begin{document}

\begin{abstract}
We provide a general framework for fractional Hardy inequalities. Our framework
covers, for instance,
fractional inequalities
related to the Dirichlet forms of some L\'evy processes, and 
weighted fractional inequalities on irregular open sets.
\end{abstract}

\maketitle

\section{Introduction}
The objective of the present paper is to study inequalities of the general form
\begin{equation}\label{eq:fhi_INTRO}
 \int_D \frac{|u(x)|^p}{\phi(\delta_x)}\,\mu(dx) \leq
c \int_D\!\int_{D\cap B(x,R\delta_x)}
 \frac{|u(x)-u(y)|^p}{\phi(\delta_x)\delta_x^d}\,\mu(dy)\,\mu(dx)\,,\qquad c,R>0\,,
\end{equation}
on metric measure spaces $(X,\rho,\mu)$, 
with the emphasis
on $X=\R^d$ equipped with the
 Euclidean distance and the Lebesgue measure.
We write
$\delta_x = \dist(x,X\setminus D)$ and $D\subset X$ is a possibly irregular open set.
The  
function $\phi:(0,\infty)\to (0,\infty)$ is a `perturbation' of
a power function $t\mapsto t^\eta$ for some $\eta\in\R$,
and the exponent $p$ satisfies $0<p<\infty$.

Our main result, Theorem \ref{t.main}
in \S \ref{s.main}, brings together two so-far distinct lines along which
the fractional Hardy inequality has been generalised: one
of them related to the function $\phi$, and the other to the regularity of the open set
$D\subset X$. 
Let us present the Euclidean version of our main result here; 
Theorem \ref{t.1} below is a
combination of Theorem~\ref{t.main}
and Propositions~\ref{p.fat} and~\ref{p.thin}.

\begin{thm}\label{t.1}
Let $0<p<\infty$, $H\in (0,1]$ and $\eta\in\R$.
Suppose that $D\not=\emptyset$ is a proper $\kappa$-plump open set in $\R^d$
and $\phi:(0,\infty)\to(0,\infty)$ is a function
 so that either condition (T) or condition (F) holds:
\begin{itemize}
\item[(T)]  
$ \overline{\mathrm{dim}}_A(\partial D)< d - \eta$, $D$ is unbounded,
and $\phi\in \WUSC(\eta,0,H^{-1})$;
\item[(F)]  $ \underline{\mathrm{dim}}_A(\partial D)  > d-\eta$, $D$ is bounded or $\partial D$ is unbounded, and  $\phi\in \WLSC(\eta,0,H)$.
\end{itemize}
Then inequality \eqref{eq:fhi_INTRO} holds
for all measurable functions $u$ for which the left hand side is finite.
\end{thm}

For the definitions of plumpness and Assouad dimensions $\overline{\mathrm{dim}}_A$ and $\underline{\mathrm{dim}}_A$, we refer to Section~\ref{s.notation}; the classes $\WUSC$ and $\WLSC$ are defined in Definition~\ref{d.WUSC}.
Let us remark that inequality  \eqref{eq:fhi_INTRO} fails for nonzero constant functions.
This means that under the assumptions of Theorem~\ref{t.1}, the integral $\int_D \phi(\delta_x)^{-1}\,dx$ must be divergent, on the other hand, it shows that the assumption of
the left hand side of \eqref{eq:fhi_INTRO} being finite is essential.
The relatively simple proof of our main result is a refinement of the techniques in
\cite{Dyda} where, e.g., Hardy inequalities \eqref{e.sp_hardy} with $\beta=0$
in case of bounded Lipschitz domains are established.

Theorems~\ref{t.1} and \ref{t.main} provide a general framework for fractional Hardy inequalities,
as they allow  for both general 
open sets $D$ and functions $\phi$ simultaneously. 
In the sequel, we will state separate corollaries in each of these directions to make the exposition of our framework transparent.
First, instead of considering classes $\WUSC$ and $\WLSC$, we will confine ourselves to the well-known \emph{regularly varying} functions.
Let us remind that $\phi$ is called \emph{regularly varying} at origin (resp.\ infinity)  with index
$\eta$, if 
\[ 
 \frac{\phi(\lambda x)}{\phi(x)} \to \lambda^\eta
\]
 when $x\to 0_+$ (resp.\ when $x\to \infty$) for every $\lambda>0$.
In the following corollary the geometry of the underlying domain $\R^d\setminus \{0\}$ is particularly simple.

\begin{cor}\label{t.onepoint}
Suppose that $\phi:(0,\infty)\to(0,\infty)$ is a~regularly varying function at origin of index $\rho_0$,
a~regularly varying function at infinity of index $\rho_\infty$, and is bounded and bounded away
from zero on every compact subset of $(0,\infty)$.
Suppose that either $0 <  \rho_0, \rho_\infty < d$ or $\rho_0, \rho_\infty > d$.
Let $0<p<\infty$.
Then there exists a constant $c=c(\phi, d, p)$ such that inequality
\begin{equation}\label{eq:onepoint}
 \int_{\R^d \setminus \{0\}} \frac{|u(x)|^p}{ \phi(|x|)}\,dx \leq
c \int_{\R^d \setminus \{0\}}\!\int_{\R^d \setminus \{0\}}
 \frac{|u(x)-u(y)|^p}{\phi(|x-y|)|x-y|^d}\,dy\,dx
\end{equation}
holds for every measurable function $u$ for which the left hand side is finite.
\end{cor}

A proof of this corollary can be found in \S\ref{s.proof.c2}.
Inequalities like \eqref{eq:onepoint} have been studied in
\cite{MR1731740} for weights of more general (but also more complicated) form and $p>1$,
and in \cite{MR2834782, MR1731740, MR1982932} in the one-dimensional case.
The forms appearing on the right hand side of \eqref{eq:onepoint} for $p=2$ (and for more general domains) 
are, at least for some functions $\phi$, the Dirichlet forms of certain L\'evy processes, which are
being extensively studied, see e.g. \cite{Grzywny, MR2928720, MR2240700} and \cite[Section 4.1]{BogdanGrzywnyRyznar}.

To discuss our results for irregular open sets, we confine
ourselves to weighted fractional Hardy inequalities  in $\R^d$, i.e.,
we consider the function $\phi(t)= t^{sp-\beta}$ with
$d+sp\ge 0$, in which case inequality \eqref{eq:fhi_INTRO} yields
\begin{equation}\label{e.sp_hardy}
 \int_D \frac{|u(x)|^p}{\delta_x^{sp}}\,\delta_x^{\beta}\,dx \leq 
c\int_D
 \int_D \frac{\lvert u(x)-y(y)\rvert^p}{\lvert x-y\rvert^{d+sp}}\, \delta_x^{\beta}\,dy\, dx\,.
\end{equation}
An open set  $D\subset \R^d$ is said to admit {\em  $(s,p,\beta)$-Hardy inequality}, if 
inequality \eqref{e.sp_hardy} holds
for all functions $u\in C^\infty_0(D)$
(i.e., smooth with compact support in $D$)
 with $c$ independent of $u$.
There has been recent interest in $(s,p,0)$-Hardy inequalities
in connection with the boundary regularity of an open set $D$, we refer to 
 \cite{E-HSV,ihnatsyeva3,ihnatsyeva1,ihnatsyeva2}.
In another direction,  the sharp constants for fractional Hardy-type
inequalities on general domains are obtained in \cite{MR2659764}, where the distance
is replaced with an averaged pseudo distance.
In \cite{MR2910984} these results are further refined;
let us also mention the other related papers  \cite{MR2663757,MR2755892,MR3021545,MR2723817,
MR2823046}.

The non-fractional counterpart of inequality \eqref{e.sp_hardy} has also been studied.
Namely, the following weighted
$(p,\beta)$-Hardy inequality, with $c>0$,
\begin{equation}\label{e.p-hardy}
\int_D \frac{\lvert u(x)\rvert^p}{\delta_x^p}\,\delta_x^{\beta}\,dx 
\le c \int_D \lvert \nabla u(x)\rvert^p\,\delta_x^\beta\,dx
\end{equation}
holds for every $u\in C^\infty_0(D)$ if $D$ is a bounded Lipschitz domain,
$1<p<\infty$, and $\beta<p-1$, \cite{MR0163054}.
More generally, an open set admits a $(p,\beta)$-Hardy inequality if the 
complement $D^c=\R^d\setminus D$
is either sufficiently `thin' or  `fat'.
For instance, an open set $D$ admits a $(p,0)$-Hardy inequality
if $D^c$ is $(1,p)$-uniformly fat and $1<p<\infty$,
\cite{Lewis1988}. 
The $(1,p)$-fatness of
$D^c$ is also known to be sufficient for certain $(p,\beta)$-Hardy inequalities,
\cite{LeLip,MR1010807}.
A deeper understanding of the `thin' vs. `fat'  dichotomy is reached
in an independent recent study \cite{lehrbackHardyAssouad}, where an open set $D\subset X$
is shown to admit
 a $(p,\beta)$-Hardy inequality if $D^c=X\setminus D$
 sufficiently thin or fat, measured in
terms of upper and lower Assouad dimension ($\overline{\mathrm{dim}}_A$ and  $\underline{\mathrm{dim}}_A$), respectively.
We also refer to \cite{MR1948106}.

Our framework covers an  Assouad dichotomy result for
fractional  $(s,p,\beta)$-Hardy inequalities with $X=\R^d$,
see the following Corollary~\ref{t.cor}.

\begin{cor}\label{t.cor}
Let $p$, $s$, $\beta$ be real numbers so that $0<p<\infty$ and  
$d+sp\ge 0$.
Suppose $D\not=\emptyset$ is a proper $\kappa$-plump open set in $\R^d$ so that either condition (T) or condition (F) holds:
\begin{itemize}
\item[(T)] $\overline{\mathrm{dim}}_A(\partial D) < d-sp+\beta$ and $D$ is unbounded;
\item[(F)]  $\underline{\mathrm{dim}}_A(\partial D)> d-sp+\beta$, and  $D$ is bounded or $\partial D$ is unbounded.
\end{itemize}
Then $D$ admits an $(s,p,\beta)$-Hardy inequality, i.e., inequality \eqref{e.sp_hardy} holds
for all $u\in C_0^\infty(D)$ with a constant $c>0$ independent of $u$.
\end{cor}

This corollary follows  from Theorem~\ref{t.1} with the aid of Example~\ref{e.reg}.
As an
illustration of this result,
 we may consider the
domain
$D\subset \R^2$
bounded by the Koch snowflake.
 It is a
 domain with a property
$\underline{\mathrm{dim}}_A(\partial D) = \log 4 / \log 3$.
Since $D$ is also
$\kappa$-plump, Corollary~\ref{t.cor} does apply.
In the `thin case' we may, e.g., consider the unbounded domain $G := \R^d\setminus \overline{D}$. Now $G$ is $\kappa$-plump
and it satisfies $\overline{\mathrm{dim}}_A(\partial G) = \log 4/ \log 3$.
Let us note that \cite[Theorem 1.1]{Dyda}, apart from the case (T2) for $\alpha>1$, is a special case of Corollary~\ref{t.cor}.
We would also like to note that we do not need to assume the positivity of $s$ in Corollary \ref{t.cor}.

Let us comment on the cases (T)  and (F) in Corollary~\ref{t.cor}.
Focusing on the case (T) first, 
recall that $\overline{\mathrm{dim}}_A(\partial D)=d-1$ for a Lipschitz domain $D$.
The unboundedness of $D$ cannot be removed, at least if $0<s<1$, in which case
a bounded Lipschitz domain satisfies an $(s,p,0)$-Hardy inequality
if and only if $sp>1$, \cite{Dyda}. 
Certain non-homogeneous $(s,p,0)$-Hardy inequalities
remain valid for John domains $D$
with $\overline{\mathrm{dim}}_{A}(\partial D) < d-sp$, \cite{ihnatsyeva2}. 
Therein (T) with $\beta=0$ is formulated  in terms of a certain Aikawa dimension which equals to the upper Assouad dimension in Euclidean spaces, see \cite{lehrbackII}.
Recalling that John domains are both bounded and $\kappa$-plump, we may conclude
that our framework provides a far-reaching generalisation of the
aforementioned non-homogeneous results to the  case
of unbounded open sets.


Moving on to the case (F) with  `fat' boundary, let us  first formulate an
illustrative, but more restrictive,
version 
 of Corollary~\ref{t.cor}. We refer to \S \ref{s.unif_fatness}
for the relevant definitions.

\begin{cor}\label{t.fat}
Let $p$, $s$, $\beta$ be real numbers so that $1<p<\infty$, $0<sp-\beta<d$, and 
$d+sp\ge 0$.
Suppose $D$ is a  $\kappa$-plump
 open set in $\R^d$ such that $\partial D$ is $(s-\beta/p,p)$-uniformly fat
(-locally uniformly fat, if $D$ is bounded).
 Then $D$ admits an $(s,p,\beta)$-Hardy inequality.
\end{cor}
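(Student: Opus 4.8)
The plan is to deduce the corollary from Theorem~\ref{t.cor}: the only work is to show that the assumed uniform fatness of $\partial D$ forces hypothesis (F) there. First I would dispose of the degenerate case: if $\partial D=\emptyset$ then $D$ equals $\emptyset$ or $\R^d$, and \eqref{e.sp_hardy} holds trivially (when $D=\R^d$ one has $\delta_x=\infty$ everywhere, so the left-hand side vanishes because $sp-\beta>0$); hence we may assume $D$ is proper and $\partial D\neq\emptyset$. Writing $\alpha:=s-\beta/p$, the standing hypotheses become $1<p<\infty$ and $0<\alpha p<d$, and the identity $d-\alpha p=d-sp+\beta$ makes visible that the exponent appearing in (F) is exactly $d-\alpha p$.

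The heart of the argument is the implication: if $E\subset\R^d$ is $(\alpha,p)$-uniformly fat, then $\underline{\mathrm{dim}}_A(E)>d-\alpha p$; the local version, for a bounded set $E$ that is only locally uniformly fat, is proved the same way, since for bounded sets the lower Assouad dimension depends only on the scales below the diameter, where local fatness applies. I would obtain the \emph{strict} inequality from the self-improving property of uniform fatness \cite{Lewis1988}: an $(\alpha,p)$-uniformly fat set is already $(\alpha,q)$-uniformly fat for some $q\in(1,p)$, and $(\alpha,q)$-fatness forces $\underline{\mathrm{dim}}_A(E)\ge d-\alpha q>d-\alpha p$ via the standard comparison of the relative $(\alpha,q)$-capacity of $E\cap\overline{B}(x,r)$ in $B(x,2r)$ with a power of $r$ (one could also quote such a dimension estimate directly, cf.\ \cite{lehrbackHardyAssouad}). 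Applied with $E=\partial D$, this yields $\underline{\mathrm{dim}}_A(\partial D)>d-sp+\beta$.

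Finally I would check the boundedness clause of (F). If $D$ is bounded, the local fatness hypothesis together with the local version of the previous step gives the dimension bound, and the clause ``if $D$ is unbounded, then $\partial D$ is unbounded'' is vacuous. If $D$ is unbounded, then $\partial D$ is globally $(\alpha,p)$-uniformly fat, and I claim $\partial D$ must be unbounded: otherwise, choosing $x\in\partial D$ and $r$ so large that $\partial D\subset B(x,r)$, the relative $(\alpha,p)$-capacity of $\partial D\cap\overline{B}(x,r)=\partial D$ in $B(x,2r)$ would stay bounded as $r\to\infty$, while that of $\overline{B}(x,r)$ in $B(x,2r)$ is comparable to $r^{d-\alpha p}\to\infty$ (here $d-\alpha p>0$), contradicting the fatness inequality. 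In either case hypothesis (F) holds for the proper, nonempty, $\kappa$-plump open set $D$, so Theorem~\ref{t.cor} delivers the asserted $(s,p,\beta)$-Hardy inequality. The only genuinely non-routine ingredient is the middle step — passing from uniform fatness to a strict lower bound on $\underline{\mathrm{dim}}_A(\partial D)$ — and this is the one place where the restriction $p>1$ is really used, through the self-improvement of fatness; everything else is bookkeeping of exponents and of the bounded/unbounded alternative.
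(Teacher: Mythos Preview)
Your proposal is correct and follows essentially the same route as the paper: reduce to Theorem~\ref{t.cor} by verifying condition~(F), the key point being that $(s-\beta/p,p)$-uniform fatness of $\partial D$ forces the strict inequality $\underline{\mathrm{dim}}_A(\partial D)>d-sp+\beta$. The only cosmetic difference is in how this implication is obtained. The paper adopts from the outset a Hausdorff-content definition of $(s,p)$-uniform fatness (inequality~\eqref{e.hinfty} with some $\lambda>d-sp$), noting that for closed sets this is equivalent to the usual capacity formulation precisely because of the self-improvement you invoke; the strict dimension bound and the unboundedness of $\partial D$ then fall out by the elementary counting argument of Propositions~\ref{p.unif_fatness} and~\ref{p.local_unif_fatness}. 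You instead keep the capacity definition and call on Lewis's self-improvement explicitly to pass from exponent $p$ to some $q<p$, and you argue unboundedness of $\partial D$ via growth of the relative capacity. Both arguments are the same idea in different packaging; the paper's version has the advantage that once the Hausdorff-content reformulation is accepted, no further appeal to capacity theory is needed.
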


This corollary is a consequence of 
Corollary~\ref{t.cor} and Propositions \ref{p.unif_fatness} and \ref{p.local_unif_fatness}.
Unlike in the case of inequality \eqref{e.p-hardy} with $\beta=0$ and $s=1$,
 the $(s,p)$-uniform fatness of $\partial D$ (let alone $D^c$)
is not a sufficient condition for an open set $D$ to admit an $(s,p,0)$-Hardy inequality
(at least) in the case of $0<sp\le 1$.
This `non-local obstruction' is recognised and addressed in  \cite {ihnatsyeva3}.
It affects the fractional Hardy inequalities studied in \cite{E-HSV}, where
 $D^c$ is assumed to be $(s,p$)-uniformly fat and,
as a conclusion, on the right hand side of \eqref{e.sp_hardy}
one then has integration over $\R^d\times \R^d$.

Suppose that $D$ is an open set whose boundary is $(s,p)$-uniformly fat  (locally uniformly fat, if $D$ is bounded). It is an interesting question, what additional conditions are sufficient for $D$
to admit an $(s,p,0)$-Hardy inequality. To this end, we improve 
\cite[Corollary 1.4]{ihnatsyeva3} where uniformity, \cite{Martio,MR927080}, of a domain $D$ is
shown to be a sufficient additional condition. Indeed, by Corollary~\ref{t.fat},
 we may replace uniformity with the significantly weaker $\kappa$-plumpness.
Let us nevertheless remark that \cite[Theorem 4.1]{ihnatsyeva3},
stated in terms of a `visibility condition on the boundary', still covers some
other cases where our results do not apply, e.g., certain domains with outward cusps.

The structure of this paper is as follows.
In \S \ref{s.notation} we define
both the lower and upper Assouad dimension, and the notion
of $\kappa$-plumpness. We also present other basic notation.
Our main result is Theorem \ref{t.main}, stated and proven in \S \ref{s.main}.
There
we also define
classes $\WLSC$ and $\WUSC$ of functions $\phi$ and a condition
$\DC(a,\gamma,d)$ for open sets $D$. 
The latter condition is further clarified in 
\S \ref{s.fat} and \S \ref{s.thin}, where
we study the cases of `fat' and `thin' boundaries in terms of 
uniform fatness, and the lower
and upper Assouad dimension.

{\ack{Research is supported by the DFG through SFB-701 `Spectral Structures and Topological Methods in Mathematics'.
Part of the research was done
while the second author was visiting University of Bielefeld,
and he
would like to thank B. Dyda and M. Ka{\ss}mann for their hospitality.
The first author was supported in part by the NCN grant 2012/07/B/ST1/03356.
The authors would like to thank  K.~Bogdan, T.~Grzywny and J.~Lehrb\"ack for helpful discussions
and preprints of \cite{BogdanGrzywnyRyznar} and \cite{lehrbackHardyAssouad}.
}}

\section{Assouad dimensions and plumpness}\label{s.notation}

We  
recall the lower and upper Assouad dimensions of a set $\emptyset\not=E\subset \R^d$, \cite{KLV}.
The lower Assouad dimension measures the `fatness' of a set $E$,
whereas the upper one measures how `thin'  a set $E$ is.
The upper Assouad dimension is often called {\em Assouad dimension}, a notion tracing back to \cite{Assouad} and even \cite{Bouligand}.
We refer to \cite{KLV,Luukkainen} for further information and other results.

\begin{defn}
Consider all $\lambda\ge 0$ for which there is $C> 0$ so that,
if $0<r<R<2\diam(E)$ and $x\in E$, then
at least $C(R/r)^\lambda$ balls---centred in $E$ and of radius $r$---are needed to cover $B(x,R)\cap E$. The supremum of all such $\lambda$ is 
called the {\em lower Assouad dimension of $E$} and it is
denoted by $\underline{\mathrm{dim}}_{A}(E)$.
\end{defn}

\begin{defn}\label{def:uAssouad}
Consider all $\lambda\ge 0$ for which there is $C> 0$ so that,
if $0<r<R<2\mathrm{diam}(E)$ and $x\in E$, then we can 
cover $E\cap B(x,R)$  by 
at most $N\le C(R/r)^\lambda$ balls $B_1,\ldots,B_N$ such
that each $B_j$ is centred in $E$ and has radius $r$. 
We call the infimum of all such $\lambda$ the {\em upper Assouad dimension
of $E$}, and write it as $\overline{\mathrm{dim}}_A(E)$.
\end{defn}

We also
recall a geometric notion from  \cite{MR927080}. See also \cite{mv}.
\begin{defn}
A set $A\subset \R^d$ is {\em $\kappa$-plump}
with $\kappa\in (0,1)$ if, for each $0<r< \mathrm{diam}(A)$ and each $x\in \bar{A}$, there
is $z\in \bar B(x,r)$ such that
$B(z,\kappa r)\subset A$.\end{defn}

Here is other notation;
$(X,\rho,\mu)$ is a metric measure space, and we denote $\delta_x=\dist(x,D^c)$ with $D^c=X\setminus D$.
The open ball centred at $x\in X$ and of radius $r>0$ is denoted by
$B(x,r)\subset X$.
The boundary of set $A$ is written
as $\partial A$,
$\overline{A}$ denotes the closure of $A$,
 and $\lvert A\rvert$ is the Lebesgue measure of a
measurable set $A\subset \R^d$.
For a proper open set $D\subset \R^d$, we fix its
Whitney decomposition $\mathcal{W}(D)$, and write
$\mathcal{W}_m(D)$
for the family of Whitney cubes with side length $2^{-m}$, $m\in\Z$.
If $Q\in\mathcal{W}(D)$, then
\begin{equation}\label{dist_est}
  \diam(Q)\le \dist(Q,\partial D)\le 4\diam(Q)\,.
\end{equation}
For other properties of Whitney cubes we refer to \cite[VI.1]{MR0290095}.

\section{Main result}\label{s.main}

We state and prove our main result.
For definition of conditions
$\DC(a,\gamma,d)$, $\WLSC(\eta,0,H)$ and $\WUSC(\eta,0,H)$, we refer to
\S \ref{s.open_set} and \S \ref{s.function}. 
The proof of Theorem \ref{t.main} is taken up in \S \ref{s.main_sub}.

\begin{thm}\label{t.main}
Suppose that a proper open set $D\subset X$ satisfies $\DC(a,\gamma,d)$ 
with $a\in (0,\infty)\setminus \{1\}$.
Moreover, suppose
 that for some $H\in (0,1]$, either
$a\in (0,1)$, $\eta+\gamma-d>0$ and $\phi\in \WLSC(\eta,0,H)$,
or 
$a>1$, $\eta+\gamma-d<0$ and $\phi\in \WUSC(\eta,0,H^{-1})$.
 Then for any $0<p<\infty$ there exist constants $c$ and $R>0$ such that
\begin{equation}\label{eq:fhi}
 \int_D \frac{|u(x)|^p}{\phi(\delta_x)}\,\mu(dx) \leq
c \int_D\!\int_{D\cap B(x,R\delta_x)}
 \frac{|u(x)-u(y)|^p}{\phi(\delta_x)\delta_x^d}\,\mu(dy)\,\mu(dx)
\end{equation}
for all measurable functions $u$ for which the left hand side is finite.
\end{thm}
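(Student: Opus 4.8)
The plan is to adapt the telescoping chain argument of \cite{Dyda} to this abstract setting. Abbreviate
\[
  F(y)\ :=\ \frac{1}{\phi(\delta_y)\,\delta_y^{\,d}}\int_{D\cap B(y,R\delta_y)}|u(y)-u(z)|^p\,\mu(dz),
\]
so that the right-hand side of \eqref{eq:fhi} equals $c\int_D F\,d\mu$; since there is nothing to prove when $\int_D F\,d\mu=\infty$, I may assume it is finite. Below, $u_B$ denotes the $\mu$-mean of $u$ over $B$ when $p\ge1$ and a suitable median when $0<p<1$, so that the Poincar\'e-type estimates used are valid for all $p>0$ (as in \cite{Dyda}). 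The first task is to use the hypothesis $\DC(a,\gamma,d)$ to attach to each $x\in D$ a chain of balls $B_0(x),B_1(x),\dots\subset D$ with $x\in B_0(x)$, radii $r_j(x)\asymp a^j\delta_x$ (so $r_0(x)\asymp\delta_x$), consecutive balls of comparable size and substantial overlap with $\mu(B_j(x)\cap B_{j+1}(x))\asymp\mu(B_j(x))\asymp r_j(x)^d$, each $B_j(x)$ lying in a fixed dilate of the right-hand side ball at its own scale (so $\delta_y\asymp r_j(x)$ for $y\in\tfrac12 B_j(x)$), and --- the part of $\DC(a,\gamma,d)$ carrying the dimension $\gamma$ --- a ``shadow'' packing bound which, after an application of Fubini below, produces a factor comparable to $a^{j(\gamma-d)}$. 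When $a>1$ the radii blow up and the chain escapes every ball; when $a<1$ the chain shrinks to a point of $\partial D$. In either case one needs $\lim_j u_{B_j(x)}=0$ for a.e.\ $x$: this is clear for $u\in C_0^\infty(D)$, and for arbitrary measurable $u$ it should follow by truncating $u$ at height $N$ --- which only decreases the right-hand side, while monotone convergence recovers the left-hand side as $N\to\infty$ --- provided one checks that no compact-support hypothesis is secretly used, in particular when $D$ is bounded.

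Granting the chains, I would split $|u(x)|^p\lesssim_p|u(x)-u_{B_0(x)}|^p+|u_{B_0(x)}|^p$ for a.e.\ $x$. A Poincar\'e-type estimate on $B_0(x)$, together with $B_0(x)\subset B(x,R\delta_x)$ and $\mu(B_0(x))\asymp\delta_x^{\,d}$, gives $|u(x)-u_{B_0(x)}|^p\lesssim_p\phi(\delta_x)F(x)$, so after dividing by $\phi(\delta_x)$ and integrating this term contributes at most a constant multiple of $\int_D F\,d\mu$. For the remaining term, telescope $|u_{B_0(x)}|\le\sum_{j\ge0}|u_{B_j(x)}-u_{B_{j+1}(x)}|$, fix $\theta\in(0,1)$, and apply the weighted $\ell^p$ inequality (Jensen with weights $\theta^j$ when $p\ge1$; the elementary $(\sum_j a_j)^p\le\sum_j a_j^p$ when $0<p<1$) to obtain
\[
  |u_{B_0(x)}|^p\ \lesssim_{\theta,p}\ \sum_{j\ge0}\theta^{j(1-p)}\,|u_{B_j(x)}-u_{B_{j+1}(x)}|^p .
\]
Chaining through the overlap $B_j(x)\cap B_{j+1}(x)$, using comparability of consecutive balls and the scaling of $\phi$ to replace $\phi(\delta_y)$ by $\phi(r_j(x))$ on $\tfrac12 B_j(x)$, and taking $R$ large enough that $B(y,R\delta_y)\supset\tfrac12 B_j(x)$ whenever $y\in\tfrac12 B_j(x)$, yields the per-step bound
\[
  |u_{B_j(x)}-u_{B_{j+1}(x)}|^p\ \lesssim\ \frac{\phi(r_j(x))}{\mu(B_j(x))}\int_{\frac12 B_j(x)}F\,d\mu .
\]

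Inserting these into $\int_D|u|^p\phi(\delta_x)^{-1}d\mu$, interchanging the sum with the integrals, and applying Fubini to move the $x$-integration inside the $y$-integration hidden in $\int_{\frac12 B_j(x)}F\,d\mu$, I arrive at
\[
  \int_D\frac{|u(x)|^p}{\phi(\delta_x)}\,d\mu\ \lesssim\ \int_D F\,d\mu\ +\ \sum_{j\ge0}\theta^{j(1-p)}\int_D F(y)\left(\,\int_{\{x\,:\,y\in\frac12 B_j(x)\}}\frac{\phi(r_j(x))}{\phi(\delta_x)}\,\frac{\mu(dx)}{\mu(B_j(x))}\right)\mu(dy).
\]
On the set $\{x:y\in\tfrac12 B_j(x)\}$ one has $r_j(x)\asymp a^j\delta_x$ and $\delta_y\asymp r_j(x)$, hence $\delta_x\asymp a^{-j}\delta_y$; the scaling hypothesis on $\phi$ --- $\WLSC(\eta,0,H)$ when $a<1$, $\WUSC(\eta,0,H^{-1})$ when $a>1$ --- then bounds $\phi(r_j(x))/\phi(\delta_x)$ by a constant times $a^{j\eta}$, while the shadow packing from the first step bounds the remaining $\mu(dx)$-integral by a constant times $a^{j(\gamma-d)}$. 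Therefore
\[
  \int_D\frac{|u(x)|^p}{\phi(\delta_x)}\,d\mu\ \lesssim\ \Big(1+\sum_{j\ge0}\theta^{j(1-p)}\,a^{j(\eta+\gamma-d)}\Big)\int_D F\,d\mu ,
\]
and the series converges once $\theta$ is taken close enough to $1$ (for any $\theta$ when $0<p<1$) precisely because $\eta+\gamma-d>0$ together with $a\in(0,1)$, or $\eta+\gamma-d<0$ together with $a>1$ --- which is exactly the hypothesis. This gives \eqref{eq:fhi}.

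Two points are genuinely delicate. The first is distilling from $\DC(a,\gamma,d)$ both the chains and the shadow packing estimate with the sharp exponent $a^{j(\gamma-d)}$: this is the only place where the geometry of the possibly very irregular set $D$, and the absence of any global doubling assumption on $\mu$, actually enter, and getting this exponent right is what makes the final geometric series reflect the dichotomy $\eta+\gamma-d\gtrless 0$. The second is the passage from $u\in C_0^\infty(D)$ to an arbitrary measurable $u$ with finite left-hand side, i.e.\ checking that the telescoping series truly reconstructs $u_{B_0(x)}$ with no hidden support requirement, and in particular handling the bounded case, where the chain terminates at $\partial D$ rather than escaping to infinity.
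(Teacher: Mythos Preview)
Your approach and the paper's are genuinely different. The paper does \emph{not} chain: it fixes a single large step $k$, splits $D$ into a ``good'' set
\[
D\setminus F=\Big\{x:\ |u(x)|^p\le S\,\delta_x^{-d}\!\int_{D\cap B(x,R\delta_x)}|u(x)-u(y)|^p\,\mu(dy)\Big\}
\]
on which the desired estimate holds pointwise, and a ``bad'' set $F$. For $x\in F\cap B_j^{(n)}$ it shows, via a pigeonhole argument, that on at least half of each associated $B_i^{(n+k)}$ the values of $|u|$ are comparable to $|u(x)|$; together with (B2), (B4) and the scaling of $\phi$ this yields the one-step self-improvement
\[
\int_F\frac{|u|^p}{\phi(\delta)}\,d\mu\ \le\ q\int_D\frac{|u|^p}{\phi(\delta)}\,d\mu,\qquad q=2^{p+1}M^{4+2|\eta|}H^{-1}a^{k(\eta+\gamma-d)}<1,
\]
and the \emph{finiteness hypothesis on the left-hand side} is used precisely here, to subtract and obtain $\int_F\le\tfrac{q}{1-q}\int_{D\setminus F}$. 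No averages, no infinite telescoping, no limit at the end of a chain.

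Your argument, by contrast, hinges on two points that are not covered. First, the $\DC(a,\gamma,d)$ condition gives, for each $B_j^{(n)}$ and each large $k$, an associated family $V(B_j^{(n)},k)$ at level $n+k$ that is merely \emph{close} (condition (B3)), not overlapping, and (B4) is a one-step packing; your chain with ``consecutive balls of comparable size and substantial overlap'' and the $j$-step shadow bound $a^{j(\gamma-d)}$ are not what the definition provides, and iterating (B4) introduces powers of $M$ you have not accounted for. Second, and more seriously, the step $\lim_j u_{B_j(x)}=0$ is not justified for general measurable $u$ with finite left-hand side. Your proposed fix---truncate $u$ at height $N$---does not help: the truncation of a function need not vanish near $\partial D$ (when $a<1$) or at infinity (when $a>1$), so the averages along the chain need not tend to $0$ (think of $u\equiv 1$). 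This is exactly the obstruction the paper's absorption trick is designed to bypass: rather than proving the tail of a chain vanishes, it uses the finiteness assumption to absorb the bad-set contribution into the good-set contribution in a single inequality.

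In short: the strategy you outline is the natural chaining heuristic, and it does work for $u\in C_0^\infty(D)$, but the passage to arbitrary measurable $u$ with finite left side is a real gap, not a technicality, and the paper's one-step absorption argument is the clean way around it.
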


\subsection{Assumptions on a function $\phi$}\label{s.function}
We adopt the notion of a global weak lower (or upper) scaling condition ($\WLSC$ or $\WUSC$ for short)
 from \cite[Section 3]{BogdanGrzywnyRyznar}.
In our case, the middle parameter in $\WLSC(\cdot, 0,\cdot)$ and $\WUSC(\cdot, 0,\cdot)$ is always zero and could be therefore omitted, but we prefer to keep the original notation.
We formulate these conditions in an equivalent way, which is more convenient for our purposes than the original formulation.
\begin{defn}\label{d.WUSC}
Let $\eta\in \R$ and $H\in (0,1]$.
We say that a function $\phi:(0,\infty)\to(0,\infty)$ satisfies global $\WLSC(\eta, 0, H)$
(resp.,  $\WUSC(\eta, 0, H^{-1})$)
and write $\phi \in \WLSC(\eta, 0, H)$ ($\phi \in \WUSC(\eta, 0, H^{-1})$), if
\begin{align}
\phi(st) \geq H t^\eta \phi(s), \quad s>0\,, \label{eq:phi-assumption}
\end{align}
for every $t\geq 1$ (resp., for every $t\in (0,1]$).
\end{defn}

\begin{rem}
If the domain $D$ in Theorem~\ref{t.main} is bounded, then it suffices to assume \eqref{eq:phi-assumption}
for all $s, st < \diam(D)$.
\end{rem}

\begin{exmp}\label{e.reg}
Function $\phi(x)=x^\eta$, $\eta \in \R$, satisfies $\WLSC(\eta,0,1)$ and $\WUSC(\eta,0,1)$.
\end{exmp}

\begin{exmp}\label{ex:regular}
Suppose that $\phi:(0,\infty)\to(0,\infty)$ is a~regularly varying function at origin of index $\rho_0$,
a~regularly varying function at infinity of index $\rho_\infty$, and is bounded and bounded away
from zero on every compact subset of $(0,\infty)$.
If $\rho_0>\eta$ and $\rho_\infty>\eta$, then $\phi \in \WLSC(\eta,0,H)$ for some $H\in(0,1]$, and
if $\rho_0<\eta$ and $\rho_\infty<\eta$, then $\phi \in \WUSC(\eta,0,H^{-1})$ for some $H\in(0,1]$.
These follow from Potter's theorem \cite[Theorem 1.5.6]{MR1015093}.

We note that if, say, $a<1$, $\rho_0\geq \eta$, $\rho_\infty \geq \eta$, and if $\eta+\gamma-d>0$
and the assumptions on domain in Theorem~\ref{t.main} hold, then also the assertion \eqref{eq:fhi} holds.
Indeed, for every $\varepsilon>0$ function $\phi$ satisfies $\WLSC(\eta-\varepsilon,0,H_\varepsilon)$ with
some constant $H_\varepsilon \in (0,1]$,
hence by taking $\varepsilon>0$ small enough we still have $(\eta-\varepsilon)+\gamma-d>0$. 

To have more concrete examples, let us note that functions
\begin{align*}
\phi_1(x) &= x^\alpha + x^\beta,\\
\phi_2(x) &= x^\eta(1+|\log x|)^\beta
\end{align*}
are regularly varying both at the origin (of indices $\min(\alpha,\beta)$ and $\eta$, respectively)
and at infinity  (of indices $\max(\alpha,\beta)$ and $\eta$, respectively).
\end{exmp}

\begin{exmp}
Functions $\phi$ are not confined to regularly varying functions. Indeed,
$\phi(x) = x^\eta e^x$ satisfies $\WLSC(\eta,0,1)$, but is not regularly varying at infinity.
\end{exmp}

\subsection{Assumption $\DC(a,\gamma,d)$ on open sets}\label{s.open_set}
In what follows we assume that $D$ is an open set in  a metric measure
space $(X,\rho,\mu)$.

\begin{defn}\label{d.bsets}
We say that $D$ satisfies a {\em domain condition} $\DC(a,\gamma,d)$
where $\gamma\in\R$, $d>0$, $a>0$,
$a\neq 1$,
 if there exist $M>0$
and (possibly empty) families $\mathcal{B}^{(n)}=\{B_j^{(n)}\}$ of  subsets  of $D$ indexed by $n\in\Z$  such that the following conditions (B1)--(B4) hold.
\begin{itemize}
\item[(B1)]
$D=\cup_{j,n} B_j^{(n)}$ and each $x\in D$ belongs to at most $M$ sets
$B_j^{(n)}$.
\item[(B2)]
For any $B_j^{(n)}$ we have
\begin{align*}
&M^{-1} a^n  \leq  \delta_x \leq M a^n,\quad x\in B_j^{(n)},\\
&M^{-1} a^{nd}  \leq \mu(B_j^{(n)}) \leq M a^{nd}.
\end{align*}
\item[(B3)]
For any $B_j^{(n)}$ and any integer $k>M$, 
there exists a nonempty finite set $V(B_j^{(n)}, k)$ of
indices so that, for each $i\in V(B_j^{(n)}, k)$,
\begin{align*}
\sup\{\rho(x,y)\,:\, x\in B_j^{(n)}\text{ and } y\in B_i^{(n+k)}\} &\leq
\begin{cases}
  Ma^n\,, & \text{if $a<1$\,,}\\
  Ma^{n+k}\,, & \text{if $a>1$\,.}
\end{cases}
\end{align*}
\item[(B4)] For each $n\in\Z$ and $k>M$,
\[
\sup_i \sum_{j:i\in V(B^{(n)}_j,k)} \frac{1}{\sharp V(B^{(n)}_j,k)} \le M a^{k \gamma}\,.
\]
\end{itemize}
\end{defn}

\begin{rem}
The slightly technical Definition \ref{d.bsets} allows
a unified treatment of fractional Hardy inequalities in different cases.
In the Euclidean spaces, we usually take Whitney cubes of roughly the same size as the sets $B_j^{(n)}$ and then 
conditions $(B1)$ and $(B2)$
are immediate, we refer to \S\ref{s.a_sets} and \S\ref{s.b_sets} for examples.
Formulating the definition using these general sets rather than
Whitney cubes gives us some flexibility, see Examples~\ref{ex:onepoint-thin}
and~\ref{ex:onepoint-fat} for an illustration. 
The last two conditions (B3) and (B4) specify the relationship 
between the layers 
\[D_n:=D\cap \{x\, :\, M^{-1}a^n \leq \delta_x \leq Ma^n\}\]
and \[
D_{n+k}:=D\cap \{x \, :\, M^{-1}a^{n+k} \leq \delta_x \leq Ma^{n+k}\}
\] for large values of $k$.
Namely, in a `neighbourhood' of each $B_j^{(n)}\subset D_n$ there should be `sufficiently many' sets $B_i^{(n+k)}\subset D_{n+k}$; the number $\gamma$ describes this  quantitatively.
\end{rem}

Below we provide some illustrative examples of
 sets satisfying condition $\DC(a,\gamma,d)$. In the two examples $X=\R^d$ with
the Euclidean distance, in which
case $\delta_x = \dist(x,\partial D)$ for all $x\in D$. Moreover,
$\mu$ is the Lebesgue measure.

\begin{exmp}\label{ex:onepoint-thin}
Set $D=\R^d\setminus\{0\}$ satisfies condition $\DC(a,\gamma,d)$ with
$a=2$, $\gamma=0$ and 
\[M=2\vee (1-2^{-d})|B(0,1)| \vee
\frac{1}{(1-2^{-d})|B(0,1)|}\,.\]
Indeed, one may take $B_1^{(n)} := B(0,2^n)\setminus B(0,2^{n-1})$.
That is, for each $n$ there is exactly one set $B_j^{(n)}$, namely
one with $j=1$. Then $V(B_j^{(n)},k)=\{1\}$ in (B3).
\end{exmp}

\begin{exmp}\label{ex:onepoint-fat}
Set $D=\R^d\setminus\{0\}$ satisfies condition $\DC(a,\gamma,d)$ with
$a=\frac{1}{2}$, $\gamma=0$ and 
\[M=2\vee (1-2^{-d})|B(0,1)| \vee
\frac{1}{(1-2^{-d})|B(0,1)|}\,.\]
Indeed, one may take $B_1^{(n)} := B(0,2^{-n})\setminus B(0,2^{-n-1})$
and $V(B_j^{(n)},k)=\{1\}$ in (B3).
\end{exmp}

\subsection{Proof of Theorem \ref{t.main}}\label{s.main_sub}
Let us write 
\[q= 2^{p+1} M^{4+2|\eta|}  H^{-1}a^{k(\eta+\gamma-d)}\,,\quad 
R=1+M^2(1\vee a^k)\,,\quad S=2^{p+1}a^{-kd}M^{d+1}\,,\]
where $k>M$ is chosen such that $q<1$ and $a^k \vee a^{-k} > M^2$.

We fix a function $u$ for which the left hand side of \eqref{eq:fhi}
is finite, and define a set
\[
 F=\bigg\{x\in D\,:\, |u(x)|^p > S\delta_x^{-d}\int_{D\cap B(x,R\delta_x)} |u(x)-u(y)|^p\,\mu(dy)\bigg\}\,.
\]
Let us first observe that, for $x\in D\setminus F$,
\begin{equation}\label{eq:onFc}
\frac{|u(x)|^p}{\phi(\delta_x)} \leq
S \int_{D\cap B(x,R\delta_x)} \frac{|u(x)-u(y)|^p}{\phi(\delta_x)\delta_x^d}\,\mu(dy)\,.
\end{equation}
Note that if the set $F$ were empty, we would be already done.

At this stage we fix $n$ and claim that, for $x\in F\cap B_j^{(n)}$ and $i\in V(B_j^{(n)}, k)$, we
have
\begin{equation}\label{eq:claim}
\mu\left(\left\{ y\in B_i^{(n+k)} : \frac{1}{2} |u(x)| \leq |u(y)| \leq
\frac{3}{2} |u(x)| \right\}\right) \geq \frac{1}{2}\mu( B_i^{(n+k)})\,.
\end{equation}
Suppose \eqref{eq:claim} fails.
By our choice of $R$ and conditions (B2) and (B3),
$B_i^{(n+k)} \subset D\cap B(x,R\delta_x)$.
Thus, we have
\begin{align*}
\int_{D\cap B(x,R\delta_x)} |u(x)-u(y)|^p\,\mu(dy) &\geq
\int_{B_i^{(n+k)}} |u(x)-u(y)|^p\,\mu(dy) \\
&\geq \frac{1}{2} \mu( B_i^{(n+k)}) \cdot 2^{-p} |u(x)|^p\\
&\geq  2^{-p-1}a^{kd} M^{-d-1} \delta_x^d |u(x)|^p\\
&= S^{-1} \delta_x^d |u(x)|^p\,,
\end{align*}
which contradicts $x\in F$. Thus inequality \eqref{eq:claim} holds as claimed.

Let us record the following estimates for $B_i^{(n+k)}\in\mathcal{B}^{(n+k)}$
and $B_j^{(n)}\in \mathcal{B}^{(n)}$.
By condition (B2),  $\mu(B_j^{(n)}) \leq M^2 a^{-kd} \mu( B_i^{(n+k)})$, moreover,
for $x\in B_j^{(n)}$ and $y\in B_i^{(n+k)}$ it holds $M^2a^{-k}\delta_y \geq \delta_x \geq M^{-2}a^{-k}\delta_y$.
Hence, by condition \eqref{eq:phi-assumption}
\[
 \phi(\delta_x) = \phi\left(\delta_y \frac{\delta_x}{\delta_y}\right)
 \geq H \left(\frac{\delta_x}{\delta_y}\right)^\eta  \phi(\delta_y)  
 \geq H M^{-2|\eta|} a^{-k\eta}  \phi(\delta_y).
\]
Here we need to ensure that $\frac{\delta_x}{\delta_y} < 1$ in the case when $a>1$
and that $\frac{\delta_x}{\delta_y} > 1$ in the case when $a<1$. 
But these are satisfied since, by assumption,
$a^k \vee a^{-k} > M^2$, i.e., $k$ is large enough.
By the above estimate and inequality \eqref{eq:claim} we obtain
\begin{align*}
\int_{F\cap B_j^{(n)}} \frac{|u(x)|^p}{\phi(\delta_x)}\,\mu(dx) &\leq
\mu( B_j^{(n)}) \sup_{x\in F\cap B_j^{(n)}}
  \frac{|u(x)|^p}{\phi(\delta_x)}\\
&\leq \frac{2^{p+1} M^2 a^{-kd}}{ \sharp V(B_j^{(n)}, k) }
\sum_{i\in V(B_j^{(n)}, k) }
\int_{ B_i^{(n+k)}} \frac{|u(y)|^p}{H M^{-2|\eta|} a^{-k\eta}  \phi(\delta_y)}\,\mu(dy)\,.
\end{align*}
By summing over all $j$ and applying condition (B4), 
\begin{align*}
\sum_j \int_{F\cap B_j^{(n)}} \frac{|u(x)|^p}{\phi(\delta_x)}\,\mu(dx) &\leq
2^{p+1} M^{2+2|\eta|} H^{-1} a^{k(\eta-d)} 
\times \sup_i \sum_{j:i\in V(B^{(n)}_j,k)} \frac{1}{\sharp V(B^{(n)}_j,k)}\\
&\times \sum_i
\int_{ B_i^{(n+k)}} \frac{|u(y)|^p}{ \phi(\delta_y)}\,\mu(dy)\\
&\leq 2^{p+1} M^{3+2|\eta|}  H^{-1} a^{k(\eta+\gamma-d)} \sum_i
\int_{ B_i^{(n+k)}} \frac{|u(y)|^p}{ \phi(\delta_y)}\,\mu(dy)\,,
\end{align*}
and after summing over all $n$
\begin{align*}
\int_F\frac{|u(x)|^p}{\phi(\delta_x)}\,\mu(dx) &\leq
q\int_D \frac{|u(y)|^p}{ \phi(\delta_y)}\,\mu(dy)\,.
\end{align*}
Recall that $q<1$. Hence,
 by finiteness of the left hand side of \eqref{eq:fhi},
\begin{align*}
\int_F\frac{|u(x)|^p}{\phi(\delta_x)}\,\mu(dx) &\leq
\frac{q}{1-q} \int_{D\setminus F} \frac{|u(y)|^p}{\phi(\delta_y)}\,\mu(dy)\,.
\end{align*}
This estimate and inequality \eqref{eq:onFc} finish the proof.
\qed

\subsection{Proof of Corollary \ref{t.onepoint}}\label{s.proof.c2}
We use Potter's theorem \cite[Theorem 1.5.6]{MR1015093} to replace $\phi(\delta_x)$ by
 $c \phi(|x-y|)$ in the denominator, with $c=c(R,\phi)$. 
 The assumption $0<\rho_0,\rho_\infty$ is used here.
The result follows now from Theorem~\ref{t.main} and examples~\ref{ex:regular}, \ref{ex:onepoint-thin}
and~\ref{ex:onepoint-fat}.\qed

\section{Fat boundary}\label{s.fat}

We 
prove a domain condition while assuming that the
boundary of the open set is sufficiently `fat' in terms of the lower Assouad dimension.
Then we study the relation
between lower Assouad dimension and uniform fatness.

\begin{prop}\label{p.fat}
Suppose $D\not=\emptyset$ is a proper $\kappa$-plump open set in $\R^d$ such that
$D$ is bounded or $\partial D$ is unbounded.
Then $D$ satisfies $\DC(a,\lambda,d)$ if $a=1/2$ and 
either $0<\lambda< \underline{\mathrm{dim}}_A(\partial D)$
 or $\lambda=0$.
Moreover, the associated constant $M$ depends only on $d$, $\kappa$, $\lambda$ and the constant $C$ appearing in (F1) below.
\end{prop}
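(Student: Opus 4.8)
The plan is to construct the families $\mathcal B^{(n)}=\{B_j^{(n)}\}$ directly from the Whitney decomposition $\mathcal W(D)$, taking $B_j^{(n)}$ to be (a fixed translate/copy of) the Whitney cubes of generation $-n$, i.e.\ those $Q\in\mathcal W(D)$ with side length $2^n$. Since $a=1/2$, the scaling in (B2) should read $\delta_x\asymp a^n=2^{-n}$ — so I will index a Whitney cube of side length $2^{-n}$ by the superscript $n$ (note the sign flip relative to the naive reading). Then (B1) holds with $M$ controlling the bounded overlap of Whitney cubes (in fact they tile $D$, so overlap $1$, but we need $M$ large for other reasons), and (B2) is immediate from \eqref{dist_est}: for $Q\in\mathcal W_m(D)$ one has $2^{-m}\le\diam Q\le\dist(Q,\partial D)\le 4\diam Q$ up to dimensional constants, giving $\delta_x\asymp 2^{-m}$ on $Q$, and $|Q|=2^{-md}$, so both comparisons hold with $M=M(d)$.

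The substantive content is in verifying (B3) and (B4). For (B3): given a cube $B_j^{(n)}=Q$ of side $2^{-n}$ and an integer $k>M$, I must produce a nonempty finite set $V(Q,k)$ of generation-$(n+k)$ cubes $Q'$ with $\operatorname{dist}(Q,Q')\lesssim M\,2^{-n}$ (here $a^n\vee a^{n+k}=2^{-n}$ since $k>0$). This is where $\kappa$-plumpness enters: inside a ball $B(z,c\,2^{-n})$ with $z$ near $Q$ and $c=c(\kappa)$, plumpness guarantees a ball $B(w,\kappa c\,2^{-n})\subset D$; that ball contains Whitney cubes of all sufficiently small generations, in particular of generation $n+k$ once $k>M$ is large enough (depending on $\kappa$), and these all lie within distance $\lesssim M\,2^{-n}$ of $Q$. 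I let $V(Q,k)$ be the collection of all generation-$(n+k)$ Whitney cubes meeting a suitable such plump ball (or a bounded family of them); finiteness is automatic and nonemptiness follows from $k>M$. The unboundedness hypothesis on $\partial D$ (when $D$ is unbounded) is needed exactly so that $\operatorname{diam}(\partial D)=\infty$ and the plumpness condition ``$0<r<\operatorname{diam} A$'' is usable at all scales $2^{-n}$, $n\in\Z$; if $D$ is bounded one only needs $n$ large, matching the remark about bounded $D$.

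For (B4) — which I expect to be the main obstacle — I need
\[
 \sup_i \sum_{j:\,i\in V(B^{(n)}_j,k)} \frac{1}{\sharp V(B^{(n)}_j,k)} \le M\, a^{k\lambda}=M\,2^{-k\lambda}.
\]
Fix a generation-$(n+k)$ cube $B_i^{(n+k)}$. The cubes $B_j^{(n)}$ with $i\in V(B_j^{(n)},k)$ are generation-$n$ cubes lying within distance $\lesssim M\,2^{-n}$ of $B_i^{(n+k)}$, hence within a ball of radius $R_0:=C'M\,2^{-n}$ centred at (a point of) $\partial D$ near $B_i^{(n+k)}$; there are at most $C''M^d$ of them by the uniform separation of same-generation Whitney cubes, so the sum is at most $C''M^d\cdot\max_j \sharp V(B_j^{(n)},k)^{-1}$. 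Thus it suffices to bound $\sharp V(B_j^{(n)},k)$ \emph{from below} by $c\,2^{k\lambda}$. This is precisely a \emph{lower Assouad dimension} estimate: $V(B_j^{(n)},k)$ essentially exhausts the generation-$(n+k)$ Whitney cubes inside a plump ball of radius $\asymp 2^{-n}$, and the number of disjoint balls of radius $\asymp 2^{-(n+k)}$ centred in $\partial D$ needed to \emph{fill} (equivalently, that fit inside) such a ball is $\gtrsim (R/r)^\lambda = 2^{k\lambda}$ for every $\lambda<\underline{\dim}_A(\partial D)$, by definition of the lower Assouad dimension and the constant $C$ in the covering condition (F1). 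Packing the plump sub-ball $B(w,\kappa c\,2^{-n})$ with generation-$(n+k)$ Whitney cubes, and comparing with this Assouad count at the boundary point nearest $B_j^{(n)}$, yields $\sharp V(B_j^{(n)},k)\ge c(d,\kappa,\lambda,C)\,2^{k\lambda}$, and assembling the constants gives (B4) with an $M$ depending only on $d$, $\kappa$, $\lambda$ and $C$. The case $\lambda=0$ is trivial since then the right-hand side of (B4) is just $M$ and $\sharp V\ge 1$ suffices. I would organise the write-up as: (i) fix the Whitney families and check (B1), (B2); (ii) prove a lemma that inside a $\kappa$-plump ball of radius $r$ there are, for $k$ large, at least $c\,2^{k\lambda}$ generation-adapted Whitney cubes all lying in $D$; (iii) deduce (B3) and (B4) from that lemma together with the bounded-overlap/separation properties of Whitney cubes and the definition of $\underline{\dim}_A(\partial D)$.
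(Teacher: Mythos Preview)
Your overall architecture is right, but the construction of $V(B_j^{(n)},k)$ contains a genuine error that breaks the lower bound $\sharp V(B_j^{(n)},k)\gtrsim 2^{k\lambda}$. You apply $\kappa$-plumpness once at the \emph{large} scale $r\asymp 2^{-n}$ to produce a single ball $B(w,\kappa c\,2^{-n})\subset D$, and then assert that this ball ``contains Whitney cubes of all sufficiently small generations, in particular of generation $n+k$''. That is false: every point $y$ in $B(w,\kappa c\,2^{-n})$ satisfies $\delta_y\ge \kappa c\,2^{-n}$ (the ball lies entirely in $D$), so the Whitney cubes meeting it all have side length $\asymp 2^{-n}$, not $2^{-(n+k)}$. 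In the half-space $D=\R^d_+$, for instance, a plump ball at height $\asymp 2^{-n}$ is covered by $O(1)$ Whitney cubes of generation $\asymp n$ and contains no cubes of generation $n+k$ at all. Your subsequent sentence about ``disjoint balls of radius $\asymp 2^{-(n+k)}$ centred in $\partial D$'' inside this plump ball is vacuous for the same reason: the plump ball is disjoint from $\partial D$.

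The paper reverses the order of the two ingredients. Fix $B_j^{(n)}$ and a nearest boundary point $y_j^{(n)}\in\partial D$. First apply the lower Assouad estimate (F1) on $\partial D$: by the $5r$-covering lemma there are $N\gtrsim 2^{k\lambda}$ pairwise disjoint balls $B_m=B(\omega_m,2^{-n-k})$ with $\omega_m\in B(y_j^{(n)},2^{-n})\cap\partial D$. \emph{Then} apply plumpness at the \emph{small} scale, separately inside each $B_m$: there is $z_m\in\bar B(\omega_m,2^{-n-k}/3)$ with $B(z_m,\kappa\,2^{-n-k}/3)\subset D$, and the Whitney cube $Q_m\ni z_m$ has side length $\asymp 2^{-n-k}$ (because $\kappa\,2^{-n-k}/3\le\delta_{z_m}\le 2^{-n-k}$), hence lies in $\mathcal B^{(n+k)}$ once the families are defined with a tolerance $\tau=\tau(d,\kappa)$ rather than a single dyadic generation. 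The $Q_m$ are disjoint since the $B_m$ are, giving $\sharp V(B_j^{(n)},k)=N\gtrsim 2^{k\lambda}$; your argument for (B4) from that point on is correct. The unboundedness of $\partial D$ is used here to ensure $2^{-n}<2\diam(\partial D)$ so that (F1) applies at all scales. In short: Assouad first (at the boundary, many small balls), then plumpness (at the small scale, one cube per ball) --- not the other way around.
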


Under the assumptions of Proposition \ref{p.fat}, the following two
conditions (F1) and (F2) hold.

\begin{itemize}
\item[(F1)]
There is a constant $C> 0$ as follows.
Let $0<r<R< 2\mathrm{diam}(\partial D)$ and $x\in \partial D$.
Suppose that $B_1,\ldots,B_N$ is a cover of $B(x,R)\cap \partial D$ by
balls $B_j=B(\omega_j,r)$ with $\omega_j\in \partial D$ for $j=1,\ldots,N$.
Then $N\ge  C(R/r)^\lambda$.
\item [(F2)]for each $0<r< \mathrm{diam}(D)$ and each $x\in \partial D$, there
is $z\in \bar B(x,r)$ so that
$B(z,\kappa r)\subset D$.
\end{itemize}

\subsection{Construction of families $\mathcal{B}^{(n)}$}\label{s.a_sets}
We define a constant
\begin{equation}\label{e.tau_def}
\tau=\bigg(\frac{15\sqrt d}{\kappa}\bigg)^{d}>1\,.
\end{equation}
For a given $n\in \Z$ and $a\in \{\frac{1}{2},2\}$, we define
\[
\mathcal{B}^{(n)}:=\mathcal{B}^{(n)}_{1/2}\,,\qquad \mathcal{B}^{(n)}_{a}:=\{B^{(n)}_j\} := \{Q\in\mathcal{W}(D)\,:\, \tau^{-1}\le a^{-nd} \lvert Q\rvert \le \tau\}\,.
\]
Recall that $\mathcal{W}(D)$ stands for a Whitney decomposition of $D$.
In particular, by inequalities \eqref{dist_est}, for any $x\in B^{(n)}_j\in \mathcal{B}^{(n)}_a$,
\[
\tau^{-1/d} a^{n} \le \delta_x = \mathrm{dist}(x,\partial D) \le 5\sqrt d \tau^{1/d} a^{n}\,.
\]
Observe also that a given Whitney cube $Q\in\mathcal{W}(D)$ may belong to at most $1+2d^{-1}\log_2 \tau$ families 
$\mathcal{B}^{(n)}=\mathcal{B}^{(n)}_a$ indexed by $n\in \Z$.
Let us denote by $x_j^{(n)}$ the midpoint of $B_j^{(n)}$.
For later purposes we fix, once and for all, any point $y_j^{(n)}\in\partial D$ for which
\[
\lvert x_j^{(n)} - y_j^{(n)}\rvert =\mathrm{dist}(x_j^{(n)}, \partial D)\,.
\]

\subsection{Families $V(B^{(n)}_j,k)$ for $k$ large}
If $D$ is unbounded, we  construct families $V(B^{(n)}_j,k)$ for $k>3$. If $D$ is bounded, then we construct
these families for $k>3\vee \log_2(5\tau^{1/d})$.

Let us fix $B^{(n)}_j\in \mathcal{B}^{(n)}$, and define $E:=B(y_j^{(n)},2^{-n})\cap \partial D$.
By the 
$5r$-covering theorem, see for instance \cite[p. 23]{MR1333890}, there are points
$\omega_1,\ldots,\omega_N\in E$
such that the balls $B_m:=B(\omega_m, 2^{-n-k})$ are disjoint and
$E$ is covered by the union of balls $5B_m$, $m=1,\ldots,N$.
 Let us estimate
the number $N=:N_j^{(n,k)}$ of these balls;

\begin{lem}
We have $N_j^{(n,k)}\ge C5^{-\lambda}\tau^{-\lambda/d}2^{k\lambda}$.
\end{lem}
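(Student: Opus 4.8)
The plan is to bound $N_j^{(n,k)}$ from below by applying the lower-Assouad-dimension covering estimate (F1) to a suitable ball centred at a boundary point. First I would recall the setup: we have fixed $B_j^{(n)}\in\mathcal{B}^{(n)}$ with associated boundary point $y_j^{(n)}\in\partial D$, the set $E=B(y_j^{(n)},2^{-n})\cap\partial D$, and disjoint balls $B_m=B(\omega_m,2^{-n-k})$ with $\omega_m\in E$ such that $E\subset\bigcup_{m=1}^N 5B_m$. The balls $5B_m=B(\omega_m,5\cdot 2^{-n-k})$ therefore cover $E=B(y_j^{(n)},2^{-n})\cap\partial D$, and each is centred at a point $\omega_m\in\partial D$. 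This is exactly the shape of cover that (F1) constrains.

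To invoke (F1) I need the radii to fit its hypothesis $0<r<R<2\diam(\partial D)$. Here $R=2^{-n}$ and the covering radius is $r=5\cdot 2^{-n-k}$. Since $k>3$ we have $5\cdot 2^{-k}<5/8<1$, so indeed $r<R$; and $R=2^{-n}=\delta_{x_j^{(n)}}\cdot(\text{const})\le 5\sqrt d\,\tau^{1/d}\cdot 2^{-n}$ is comparable to a distance realised inside $D$, hence $R<2\diam(D)$ — but I actually need $R<2\diam(\partial D)$; this holds because $\partial D$ has points near $y_j^{(n)}$ on the scale $2^{-n}$ when $D$ is unbounded (by the assumed unboundedness of $\partial D$), and when $D$ is bounded the extra restriction $k>3\vee\log_2(5\tau^{1/d})$ together with $2^{-n}\le\delta_{x_j^{(n)}}\tau^{1/d}\le\mathrm{something}$ keeps $R$ below $2\diam(\partial D)$. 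Once the hypotheses of (F1) are met, (F1) gives
\[
N_j^{(n,k)}=N\ge C\Big(\frac{R}{r}\Big)^\lambda = C\Big(\frac{2^{-n}}{5\cdot 2^{-n-k}}\Big)^\lambda = C\,5^{-\lambda}\,2^{k\lambda}\,.
\]
This is already slightly stronger than the claimed bound $C5^{-\lambda}\tau^{-\lambda/d}2^{k\lambda}$, since $\tau>1$ and $\lambda\ge 0$ force $\tau^{-\lambda/d}\le 1$; so the stated inequality follows a fortiori. (The factor $\tau^{-\lambda/d}$ is presumably inserted to absorb the slack coming from the case $\lambda=0$, where the bound is trivial, or from the bounded-domain radius adjustment.)

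The main obstacle, and the only genuinely delicate point, is verifying that the pair $(r,R)=(5\cdot 2^{-n-k},\,2^{-n})$ really lies in the admissible range $0<r<R<2\diam(\partial D)$ of (F1) uniformly in $n,j$ — in other words, that the scale $2^{-n}$ on which we are probing $\partial D$ near $y_j^{(n)}$ is not too coarse relative to $\diam(\partial D)$. In the unbounded case this is automatic; in the bounded case it is exactly what the threshold $k>3\vee\log_2(5\tau^{1/d})$ is designed to guarantee, by making $r=5\cdot2^{-n-k}$ small enough that $R=2^{-n}$ stays below $2\diam(\partial D)$ whenever $B_j^{(n)}\ne\emptyset$ forces a Whitney cube of the corresponding size to exist inside $D$. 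After that check, the estimate $N\ge C5^{-\lambda}2^{k\lambda}\ge C5^{-\lambda}\tau^{-\lambda/d}2^{k\lambda}$ is immediate and the proof of the lemma is complete.
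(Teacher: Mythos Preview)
Your approach matches the paper's: apply (F1) with $R=2^{-n}$ and $r=5\cdot 2^{-n-k}$, noting that the $5B_m$ cover $E=B(y_j^{(n)},R)\cap\partial D$ and are centred on $\partial D$. In the unbounded case (where $\partial D$ is unbounded, so the upper restriction on $R$ is vacuous) you obtain $N\ge C5^{-\lambda}2^{k\lambda}$ exactly as the paper does.

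The bounded case, however, is where your write-up slips. The threshold $k>\log_2(5\tau^{1/d})$ does \emph{not} serve to push $R=2^{-n}$ below $2\diam(\partial D)$; increasing $k$ shrinks $r$, not $R$. The correct mechanism is to replace $R$ by the smaller radius $R'=\tau^{-1/d}2^{-n}$. From $\tau^{-1/d}2^{-n}\le \delta_{x_j^{(n)}}$ and $2\delta_{x_j^{(n)}}\le\diam(D)\le\diam(\partial D)$ one gets $R'<2\diam(\partial D)$, and the cover $\{5B_m\}$ of $E$ a fortiori covers $B(y_j^{(n)},R')\cap\partial D$. The condition $k>\log_2(5\tau^{1/d})$ is precisely what guarantees $r=5\cdot 2^{-n-k}<R'$, so (F1) applies and yields
\[
N\ge C\Big(\frac{R'}{r}\Big)^\lambda=C\,5^{-\lambda}\tau^{-\lambda/d}2^{k\lambda}.
\]
So your guess that $\tau^{-\lambda/d}$ arises from ``the bounded-domain radius adjustment'' is right, but the adjustment is to the outer radius $R$, not via the $k$-threshold acting on $R$.
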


\begin{proof}
First consider the case when $D$ is unbounded.
Since $k>3$, we find that 
\[
r:=5\cdot 2^{-n-k} < 2^{-n}=:R\,.
\]
Recall that the balls $5B_m = B(\omega_m, r)$ cover the set $E=B(y^{(n)}_j,R)\cap \partial D$.
By condition (F1), we find
that $N\ge C(R/r)^\lambda = C5^{-\lambda} 2^{k\lambda}$.
The bounded case is similar, and
we use the facts that $k>3\vee \log_2(5\tau^{1/d})$ and $\diam(\partial D)\ge \diam(D)$.
\end{proof}

The next step is to use the plumpness condition (F2) in order to locate a
sufficiently large cube inside each $B_m$. Namely, for each $m=1,\ldots,N$, there
is $z_m\in \bar B(\omega_m,  2^{-n-k}/3)$ such that
\[
B(z_m, \kappa  2^{-n-k}/3) \subset D\,.
\]
Let us consider a Whitney cube $Q_m\in \mathcal{W}(D)$ for which $z_m\in Q_m$.
By inequalities \eqref{dist_est}, we have $Q_m \subset B_m$. Moreover,
\[
\kappa  2^{-n-k}/3\le \mathrm{dist}(z_m,\partial D) \le 5 \mathrm{diam}(Q_m)\le 5\mathrm{dist}(z_m,\partial D)\le 5\cdot 2^{-n-k}/3\,.
\]
Hence, by our definition \eqref{e.tau_def} of $\tau$, we obtain
\[
\tau^{-1} \le 2^{d(n+k)}\lvert Q_m\rvert \le \tau\,.
\]
That is, cube $Q_m\subset B_m$ belongs to $\mathcal{B}^{(n+k)}$. Since the balls $B_m$, $m=1,\ldots,N$, are
disjoint, also the cubes $Q_m$ are disjoint. Hence, the indexing set
\[
V(B^{(n)}_j,k ) =  \{i\,:\, B^{(n+k)}_i=Q_m \text{ for some } m=1,\ldots,N^{(n,k)}_j\}
\]
contains exactly $N_j^{(n,k)}$ indices.

\subsection{Proof of Proposition \ref{p.fat}}

We focus on conditions (B3) and (B4), as the remaining conditions are clearly satisfied.
Let us fix $B^{(n)}_j$ and $k$ large enough so that
$V(B^{(n)}_j,k)$ is defined. Let us consider $i\in V(B^{(n)}_j,k)$, and two given points
$x\in B^{(n)}_j$ and $y\in B_i^{(n+k)}$. Using the notation above, we have
$B_i^{(n+k)} = Q_m\subset B_m$ for some $m=1,\ldots,N_j^{(n,k)}$.
Thus,
\begin{align*}
\lvert x-y \rvert &\le \lvert x - x_j^{(n)}\rvert + \lvert x_j^{(n)} - y_j^{(n)}\rvert + \lvert y_j^{(n)} - \omega_m\rvert + \lvert \omega_m - y\rvert\\
&< \mathrm{diam}(B^{(n)}_j)  + \mathrm{dist}(x_j^{(n)},\partial D) + 2^{-n} + 2^{-n-k} \\&\le 8 \sqrt d \tau^{1/d}2^{-n}\,.
\end{align*}
This is condition (B3).
A particular consequence of this estimate is the following. We fix a cube $ B^{(n+k)}_i$ and a point $y$ therein. Then, if $B^{(n)}_j\in \mathcal{B}^{(n)}$ is such
that $i\in V(B^{(n)}_j,k)$,
\[
B^{(n)}_j \subset B(y,8 \sqrt d \tau^{1/d}2^{-n})\,.
\]
Since the interiors of cubes in $\mathcal{B}^{(n)}$ are disjoint, we find that there are at most
\[
\frac{(16 \sqrt d \tau^{1/d}2^{-n})^d}{\tau^{-1}2^{-nd}} = (16 \sqrt d )^d \tau^2
\]
cubes $B^{(n)}_j$ subject to the conditions above. By using this fact, we may now deduce the remaining estimate as follows;
For a fixed $i$,
\begin{align*}
\sum_{j:i\in V(B^{(n)}_j,k)}  \frac{1}{\sharp V(B_j^{(n)},k)} =\sum_{j:i\in V(B^{(n)}_j,k)} \frac{1}{N_j^{(n,k)}} \le (16 \sqrt d )^d \tau^{2+\lambda/d} C^{-1} 5^\lambda 2^{-k\lambda}\,.
\end{align*}
This is condition (B4).
\qed

\subsection{Lower Assouad dimension and uniform fatness}\label{s.unif_fatness}
We provide a
useful connection between the lower Aikawa dimension and (local) uniform fatness.
For further discussion, we refer to \cite{KLV}.
Uniform fatness is
usually defined in terms of Riesz capacities, \cite{AH,Lewis1988}. 
In case of closed sets, there is an equivalent definition---in terms of Hausdorff content---that we adopt. This equivalence
is based on the self-improving properties of closed uniformly fat sets, \cite{ihnatsyeva3}.

Recall that the \emph{$\lambda$-Hausdorff content} of a set $E \subset \R^d$ is
\[
\mathcal{H}^\lambda_\infty(E)=\inf\bigg\{\sum_{i=1}^{\infty}r_i^\lambda :
 E\subset\bigcup_{i=1}^\infty B(x_i,r_i),\ r_i>0 \bigg\}.
\]
As is easily seen, we may allow also finite coverings in the infimum above. 
Let $1<p<\infty$ and $0<s<d/p$.
We say that the boundary  $\partial D$ is {\em $(s,p)$-uniformly fat}, if there is $d-sp<\lambda\le d$ 
and a constant $C>0$ such that
\begin{equation}\label{e.hinfty}
\mathcal{H}^\lambda_\infty(B(x,R)\cap \partial D)\ge CR^\lambda
\end{equation}
for all $x\in \partial D$ and $R>0$. 

Note that  $\partial D$ and $D$ have to be unbounded
if the boundary is $(s,p)$-uniformly fat.
Remark~2.3 in \cite{KLV} shows that
 $\underline{\mathrm{dim}}_A(\partial D)$ is the supremum 
of all $\lambda\ge 0$ for which \eqref{e.hinfty} holds for every $x\in \partial D$ and $0<R<\mathrm{diam}(\partial D)$.
Below, for the convenience of the reader, we provide a detailed treatment
of certain consequences of this  statement---that are needed for Corollary \ref{t.fat}.

\begin{prop}\label{p.unif_fatness}
Suppose  $D$ is an open set in  $\R^d$ so that $\partial D$
 is $(s,p)$-uniformly fat for $1<p<\infty$ and $0<s<d/p$.  Then $D$ satisfies
 condition (F1) for some $d-sp<\lambda \le d$ and, as a consequence,
 we have a strict inequality $\underline{\mathrm{dim}}_A(\partial D)>d-sp$.
\end{prop}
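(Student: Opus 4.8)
The plan is to translate the capacitary/Hausdorff-content notion of uniform fatness into the covering-number statement (F1), and to do so with a quantitative loss that is recorded in the exponent $\lambda$. The starting point is the hypothesis: there is some $d-sp<\lambda\le d$ and $C>0$ with $\mathcal H^\lambda_\infty(B(x,R)\cap\partial D)\ge CR^\lambda$ for all $x\in\partial D$ and all $R>0$ (note this already forces $\partial D$, hence $D$, to be unbounded, so $2\diam(\partial D)=\infty$ and the range of $R$ in (F1) is unrestricted). First I would fix $x\in\partial D$ and $0<r<R$, and take an arbitrary cover $B_1,\dots,B_N$ of $B(x,R)\cap\partial D$ by balls $B_j=B(\omega_j,r)$ with $\omega_j\in\partial D$. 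Since this is in particular a cover of $B(x,R)\cap\partial D$ by balls of radius $r$, the definition of Hausdorff content gives
\[
\mathcal H^\lambda_\infty\big(B(x,R)\cap\partial D\big)\le \sum_{j=1}^N r^\lambda = N r^\lambda.
\]
Combining with the lower bound $\mathcal H^\lambda_\infty(B(x,R)\cap\partial D)\ge CR^\lambda$ yields $N\ge C(R/r)^\lambda$, which is exactly (F1) with the same $C$ and the same $\lambda$. This is the core of the argument and is essentially immediate once the Hausdorff-content formulation of uniform fatness is in hand.

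It remains to draw the stated consequence $\underline{\mathrm{dim}}_A(\partial D)>d-sp$ as a \emph{strict} inequality. By the quoted Remark~2.3 of \cite{KLV}, $\underline{\mathrm{dim}}_A(\partial D)$ equals the supremum of all $\lambda'\ge 0$ for which the content bound \eqref{e.hinfty} holds for all $x\in\partial D$ and $0<R<\diam(\partial D)$; since our $\lambda$ satisfies \eqref{e.hinfty} for all $R>0$, in particular for $0<R<\diam(\partial D)$, we get $\underline{\mathrm{dim}}_A(\partial D)\ge\lambda$. As $\lambda>d-sp$ strictly, this already gives $\underline{\mathrm{dim}}_A(\partial D)\ge\lambda>d-sp$, hence strictness. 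Equivalently, and perhaps cleaner to state in the paper: the hypothesis that $\partial D$ is $(s,p)$-uniformly fat includes the self-improvement allowing the content exponent to be taken strictly above $d-sp$, so this $\lambda$ is a legitimate witness for the supremum defining $\underline{\mathrm{dim}}_A$.

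The one genuine subtlety — and the only place I expect to have to be careful — is the use of the equivalence between the capacitary definition of $(s,p)$-uniform fatness and the Hausdorff-content condition \eqref{e.hinfty} with exponent \emph{strictly} larger than $d-sp$. This relies on the self-improving property of closed uniformly fat sets (the paper attributes this to \cite{ihnatsyeva3}), and it is precisely this self-improvement that produces a $\lambda$ with $\lambda>d-sp$ rather than merely $\lambda=d-sp$; without it one would only get $\underline{\mathrm{dim}}_A(\partial D)\ge d-sp$, not the strict inequality. Since the excerpt has already declared that it \emph{adopts} the Hausdorff-content formulation as the working definition of $(s,p)$-uniform fatness, I would simply invoke that definition directly, so that in the write-up this subtlety is absorbed into the hypothesis and the proof reduces to the one-line counting estimate above followed by the citation of Remark~2.3 of \cite{KLV}.
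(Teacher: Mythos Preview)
Your proposal is correct and follows essentially the same approach as the paper: fix $x\in\partial D$, $0<r<R$, an arbitrary cover by $N$ balls of radius $r$, and use the trivial bound $\mathcal H^\lambda_\infty\le Nr^\lambda$ together with the assumed content lower bound to get $N\ge C(R/r)^\lambda$. The paper's own proof is exactly this one-line computation, and your additional remarks on deducing the strict inequality $\underline{\mathrm{dim}}_A(\partial D)>d-sp$ via Remark~2.3 of \cite{KLV} (or equivalently, by noting that (F1) is precisely the defining condition for $\lambda$ in the lower Assouad dimension) and on where the self-improvement enters are accurate and well placed.
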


\begin{proof}
By assumption, there is $d-sp<\lambda \le d$ and $C>0$ such that
\eqref{e.hinfty} holds for 
all $x\in\partial D$ and $R>0$.
Let us fix $x\in \partial D$ and  $0<r<R$. Suppose that $B_1,\ldots,B_N$ is
a cover of $B(x,R)\cap \partial D$ by balls $B_j=B(\omega_j,r)$ with $\omega_j\in\partial D$.
Then, by \eqref{e.hinfty},
\[
Nr^\lambda = \sum_{j=1}^N r^\lambda \ge \mathcal{H}^{\lambda}_\infty(B(x,R)\cap \partial D)\ge CR^\lambda\,.
\]
Thus, $N\ge C(R/r)^\lambda$, as required.
\end{proof}

As we have observed, uniform fatness is a convenient notion in case of unbounded open sets.
In case of a bounded open set $D$ in $\R^d$, it is
natural to assume that $\partial D$ is {\em $(s,p)$-locally uniformly fat.}
That is, there is $d-sp < \lambda \le d$ and a constant $C>0$
such that inequality \eqref{e.hinfty} holds for all $x\in \partial D$ and $0<R<2\diam(\partial D)<\infty$.

The following result is analogous to Proposition \ref{p.unif_fatness}.

\begin{prop}\label{p.local_unif_fatness}
Let  $D$ be a bounded open set in  $\R^d$ such that $\partial D$
 is $(s,p)$-locally uniformly fat for $1<p<\infty$ and $0<s<d/p$.  Then $D$ satisfies
 condition (F1) for some $d-sp<\lambda \le d$ and, as a consequence,
 we have a strict inequality $\underline{\mathrm{dim}}_A(\partial D)>d-sp$.
\end{prop}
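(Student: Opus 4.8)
The plan is to run the bounded-domain analogue of the argument for Proposition~\ref{p.unif_fatness}. The only structural difference between the two settings is that $(s,p)$-local uniform fatness supplies the Hausdorff content lower bound \eqref{e.hinfty} only in the restricted range of radii $0<R<2\diam(\partial D)<\infty$ rather than for all $R>0$. However, condition (F1) itself only ever quantifies over radii $0<r<R<2\diam(\partial D)$, so this restricted availability is exactly what is needed, and the proof of Proposition~\ref{p.unif_fatness} transfers essentially verbatim.

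Concretely, the key steps in order would be as follows. First, extract from the hypothesis the exponent $d-sp<\lambda\le d$ and the constant $C>0$ for which
\[
\mathcal{H}^\lambda_\infty(B(x,R)\cap\partial D)\ge CR^\lambda
\]
holds for every $x\in\partial D$ and every $0<R<2\diam(\partial D)$. Second, fix such an $x$ and $R$, fix $0<r<R$, and let $B_1,\dots,B_N$ with $B_j=B(\omega_j,r)$, $\omega_j\in\partial D$, be an arbitrary cover of $B(x,R)\cap\partial D$. Third, observe that this finite family is an admissible competitor in the infimum defining the Hausdorff content (finite coverings are allowed), so that
\[
Nr^\lambda=\sum_{j=1}^{N}r^\lambda\ge\mathcal{H}^\lambda_\infty(B(x,R)\cap\partial D)\ge CR^\lambda,
\]
whence $N\ge C(R/r)^\lambda$; since $x$ and $0<r<R<2\diam(\partial D)$ were arbitrary this is precisely condition (F1) for this $\lambda$ and $C$. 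Fourth, for the concluding inequality, invoke the characterisation recalled above from Remark~2.3 in \cite{KLV}: $\underline{\mathrm{dim}}_A(\partial D)$ is the supremum of all exponents $\lambda'\ge 0$ for which \eqref{e.hinfty} holds for all $x\in\partial D$ and $0<R<\diam(\partial D)$; since the $\lambda$ produced above is such an exponent and $\lambda>d-sp$, we get $\underline{\mathrm{dim}}_A(\partial D)\ge\lambda>d-sp$.

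I do not expect a genuine obstacle here: the argument is a routine adaptation. The only point needing care is the bookkeeping of admissible radii — checking that local uniform fatness at radii below $2\diam(\partial D)$ delivers exactly, and only, what (F1) demands — together with the trivial but worth-noting observation that for a bounded $D$ one has $\diam(\partial D)<\infty$, so (F1) is a nonvacuous statement.
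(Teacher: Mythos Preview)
Your proposal is correct and matches the paper's approach: the paper simply states that this proposition is analogous to Proposition~\ref{p.unif_fatness} and omits the proof, and you have written out precisely the intended adaptation, correctly noting that the restriction $0<R<2\diam(\partial D)$ in the local uniform fatness hypothesis aligns exactly with the radii appearing in condition (F1).
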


\begin{exmp}
Consider the Koch snowflake domain
$D\subset \R^2$. It is a bounded $\kappa$-plump domain and $\partial D$
is  $(s,p)$-locally uniformly fat if $1<p<\infty$ and $2-\log 4/ \log 3 < sp <2 $, see e.g. \cite{ihnatsyeva3}.
\end{exmp}

\section{Thin boundary}\label{s.thin}

The main result in this section is the following.

\begin{prop}\label{p.thin}
 Let $D\not=\emptyset$ be an unbounded $\kappa$-plump open set in  $\R^d$, $D\neq \R^d$.
Then $D$ satisfies condition $\DC(a,\lambda,d)$ for $a=2$ and $\lambda>\overline{\mathrm{dim}}_A(\partial D)$.
The associated constant $M$ depends only on $d$, $\kappa$, $\lambda$ and the constant $C$ appearing in (T1) below.
\end{prop}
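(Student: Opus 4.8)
The plan is to construct the families $\mathcal{B}^{(n)}$ and the index sets $V(B^{(n)}_j,k)$ in a way that mirrors the construction in \S\ref{s.a_sets}--\S\ref{s.fat} for Proposition~\ref{p.fat}, but now exploiting the thinness of $\partial D$ rather than its fatness. Since $a=2$, the families $\mathcal{B}^{(n)}_2 = \{Q\in\mathcal{W}(D): \tau^{-1}\le 2^{-nd}|Q|\le\tau\}$ collect Whitney cubes of comparable size $\approx 2^n$, and conditions (B1), (B2) follow exactly as before from the properties \eqref{dist_est} of Whitney cubes and the bounded-overlap of the families (each Whitney cube lies in at most $1+2d^{-1}\log_2\tau$ of them). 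The real content is again in (B3) and (B4), and the new geometric input is the analogue of (F1): the \emph{upper} Assouad dimension bound, which I will call (T1) in parallel with (F1), namely that for $\lambda>\overline{\mathrm{dim}}_A(\partial D)$ there is $C$ so that any $B(x,R)\cap\partial D$ with $x\in\partial D$ and $0<r<R$ can be covered by at most $C(R/r)^\lambda$ balls of radius $r$ centred on $\partial D$; this is precisely Definition~\ref{def:uAssouad}.

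The key difference from the fat case is the direction of the relationship between the cube $B^{(n)}_j$ (scale $2^n$) and the cubes $B^{(n+k)}_i$ (scale $2^{n+k}$): since $a=2$ with $k$ large, the child cubes are now \emph{larger}, not smaller. So given $B^{(n)}_j$ with midpoint $x^{(n)}_j$ and nearest boundary point $y^{(n)}_j$, I will let $V(B^{(n)}_j,k)$ consist of those indices $i$ for which $B^{(n+k)}_i$ lies within a controlled distance of $B^{(n)}_j$ — concretely, those Whitney cubes of side $\approx 2^{n+k}$ meeting a fixed ball $B(y^{(n)}_j, c\,2^{n+k})$ for a suitable dimensional constant $c$. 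The plumpness of $D$ (condition (F2), which holds here too) guarantees such a large cube exists near any boundary point at scale $2^{n+k}$, so $V(B^{(n)}_j,k)$ is nonempty; and a volume-packing argument, using that distinct Whitney cubes have disjoint interiors and comparable volume $\approx 2^{d(n+k)}$, bounds $\sharp V(B^{(n)}_j,k)$ from above by a dimensional constant. Condition (B3) — the diameter bound $M(a^n\vee a^{n+k})=M\,2^{n+k}$ — then follows by the same triangle-inequality chain as in \S\ref{s.fat}, estimating $|x-y|$ through $x^{(n)}_j$, $y^{(n)}_j$, and a point of $B^{(n+k)}_i$.

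For (B4) — the crucial estimate $\sup_i \sum_{j:i\in V(B^{(n)}_j,k)} (\sharp V(B^{(n)}_j,k))^{-1}\le M 2^{k\lambda}$ — I will fix a cube $B^{(n+k)}_i$ and count the cubes $B^{(n)}_j$ with $i\in V(B^{(n)}_j,k)$. By the definition of $V$, each such $B^{(n)}_j$ lies in a ball of radius $\approx 2^{n+k}$ around (a point near) $B^{(n+k)}_i$, and moreover its nearest boundary point $y^{(n)}_j$ lies within $\approx 2^{n+k}$ of $\partial D\cap B(\cdot, 2^{n+k})$. The point is that these cubes $B^{(n)}_j$, having side $\approx 2^n$ and boundary-distance $\approx 2^n$, are "anchored" near $\partial D$ at scale $2^n$ inside a region of size $2^{n+k}$; by (T1) the portion of $\partial D$ in a ball of radius $\approx 2^{n+k}$ can be covered by $\lesssim (2^{n+k}/2^n)^\lambda = 2^{k\lambda}$ balls of radius $2^n$, and each such small ball can host only boundedly many of the disjoint-interior cubes $B^{(n)}_j$. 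Hence the number of admissible $j$ is $\lesssim 2^{k\lambda}$, and since $\sharp V(B^{(n)}_j,k)\ge 1$ the sum is $\lesssim 2^{k\lambda}$, as needed — with all implied constants depending only on $d$, $\kappa$, $\lambda$, and $C$.

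The main obstacle I anticipate is getting the pairing between the two scales exactly right so that both (B3) and (B4) hold simultaneously: the definition of $V(B^{(n)}_j,k)$ must be generous enough that it is nonempty (this uses plumpness and forces the ball around $y^{(n)}_j$ to have radius a definite multiple of $2^{n+k}$), yet tight enough that the reverse count in (B4) sees only $\lesssim 2^{k\lambda}$ cubes $B^{(n)}_j$ per fixed $B^{(n+k)}_i$ — and the covering estimate (T1) must be applied at exactly the right pair of radii $r\approx 2^n$, $R\approx 2^{n+k}$, which requires $k$ large enough (say $k>3$, and $k$ above $\log_2$ of a dimensional constant) for $r<R<2\diam(\partial D)$ to be legitimate; the unboundedness of $D$ (hence, one checks, of $\partial D$) is what makes $\diam(\partial D)=\infty$ and removes any upper restriction on $R$. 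I would also need to verify, as a small preliminary, that the thinness hypothesis $\lambda>\overline{\mathrm{dim}}_A(\partial D)$ translates into (T1) in the stated quantitative form; this is immediate from Definition~\ref{def:uAssouad} once one passes from "at most $C(R/r)^\lambda$ balls suffice" to the same statement with $r,R$ ranging over $0<r<R$ rather than $0<r<R<2\diam(\partial D)$, which is automatic here since the diameter is infinite.
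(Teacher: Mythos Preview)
Your approach is essentially the paper's: same Whitney families $\mathcal{B}^{(n)}_2$, same verification of (B1)--(B2), same triangle-inequality chain for (B3), and the same idea for (B4)---anchor each small cube $B^{(n)}_j$ to its nearest boundary point $y^{(n)}_j$, trap all admissible $j$ in a ball of radius $\approx 2^{n+k}$ about a point of $\partial D$, then invoke (T1) at radii $r\approx 2^n$, $R\approx 2^{n+k}$ and a volume-packing bound. The only structural difference is that the paper takes $V(B^{(n)}_j,k)$ to be a \emph{singleton}: it uses plumpness once to pick a single point $z^{(n)}_j\in\bar B(y^{(n)}_j,2^{n+k})$ with $B(z^{(n)}_j,\kappa\,2^{n+k})\subset D$ and declares $V$ to be the index of the Whitney cube containing $z^{(n)}_j$. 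Your larger $V$ (all scale-$2^{n+k}$ cubes meeting a fixed ball) works equally well, since $\sharp V\ge 1$ suffices for (B4); the paper's choice just makes the bookkeeping cleaner.

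There is one genuine slip. You write that unboundedness of $D$ implies unboundedness of $\partial D$, and use $\diam(\partial D)=\infty$ to drop the restriction $R<2\diam(\partial D)$ in (T1). This implication is false: $D=\R^d\setminus\{0\}$ (the paper's own Example~\ref{ex:onepoint-thin}) is an unbounded $\kappa$-plump open set with $\partial D=\{0\}$. The paper handles this correctly by observing (just after stating (T1)--(T2)) that in Definition~\ref{def:uAssouad} the restriction $R<2\diam(E)$ may be removed without changing the infimum, so (T1) holds for all $0<r<R$ regardless of whether $\partial D$ is bounded. The unboundedness of $D$ is used elsewhere---precisely where you also use it---namely to apply plumpness at the arbitrarily large scale $r=2^{n+k}$ when producing a cube in $\mathcal{B}^{(n+k)}$; this needs $r<\diam(D)=\infty$, not anything about $\diam(\partial D)$.
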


Before  the proof, let us clarify the assumptions.
Under the assumptions of Proposition \ref{p.thin}, the following two
conditions (T1) and (T2) hold.

\begin{itemize}
\item[(T1)] 
there is a constant $C> 0$ as follows.
Assuming that $0<r< R$ and $x\in \partial D$, there is a cover of $B(x,R)\cap \partial D$
by using balls $B(\omega_j,r)$ with $\omega_j\in \partial D$, $j=1,\ldots,N$, such that the number
of these balls satisfies inequality $N\le C(R/r)^\lambda$.
\item [(T2)] for each $0<r$ and each $x\in \partial D$, there
is a point $z\in \bar B(x,r)$ such that $B(z,\kappa r)\subset D$.
\end{itemize}

We note that in Definition~\ref{def:uAssouad} the restriction $R<2\diam(E)$ may be removed,
resulting in no such restriction in (T1).

\subsection{Construction of families $\mathcal{B}^{(n)}$}\label{s.b_sets}
Let $\tau>1$ be defined by \eqref{e.tau_def}.
For $n\in \Z$, we define
\[
\mathcal{B}^{(n)}:= \mathcal{B}^{(n)}_{2}:=\{B^{(n)}_j\} := \{Q\in\mathcal{W}(D)\,:\, \tau^{-1}\le 2^{-nd} \lvert Q\rvert \le \tau\}\,.
\]
For properties of cubes in these families and definition of $y_j^{(n)}$, see
 \S \ref{s.a_sets}.
Recall also that $\mathcal{W}(D)$ is a Whitney decomposition of $D$, 
we refer to \S \ref{s.notation}.

\subsection{Families $V(B^{(n)}_j,k)$ with $k>0$}
Let us fix a cube $B^{(n)}_j\in\mathcal{B}^{(n)}$. 
By condition (T2), there is a point $z_j^{(n)}\in \bar B(y_j^{(n)}, 2^{n+k})$ such that
\[
B(z_j^{(n)}, \kappa 2^{n+k})\subset D\,.
\]
Observe how the unboundedness of $D$ is visible here,
as $k>0$ is arbitrary.
Let $Q_j\in\mathcal{W}(D)$ be a Whitney cube such that $z_j^{(n)}\in Q_j$. Then
\begin{align*}
\kappa\cdot 2^{n+k} \le \mathrm{dist}(z_j^{(n)}, \partial D)\le 5 \mathrm{diam}(Q_j) \le 5\mathrm{dist}(z_j^{(n)}, \partial D) \le 5 \cdot 2^{n+k}\,.
\end{align*}
By definition of \eqref{e.tau_def} of $\tau$,
we then have $Q_j= B^{(n+k)}_i\in \mathcal{B}^{(n+k)}$ for some index $i$.
We define
\[
V(B^{(n)}_j, k ) = \{i\}\,.
\]

\subsection{Proof of Proposition \ref{p.thin}}
We need an auxiliary estimate
analogous to \cite[Lemma 4.3]{KLV};
condition (T1) is our primary tool.
For  $m\in\Z$, $\omega\in \partial D$, and $R>0$, we denote
\[
\mathcal{W}_m(D;B(\omega,R)) = \{Q\in \mathcal{W}_m(D)\,:\, Q\subset B(\omega,R)\}\,.
\]

\begin{lem}
Let $0<2^{-m}\le R$, where $m\in \Z$,
and let $\lambda>\overline{\mathrm{dim}}_A(\partial D)$.
Then for every $\omega\in \partial D$,
\[
\sharp \mathcal{W}_m(D; B(\omega,R))\le C (14\sqrt d)^{d+\lambda}\bigg(\frac{R}{2^{-m}}\bigg)^\lambda\,,
\]
where $C$ is as in condition (T1).
\end{lem}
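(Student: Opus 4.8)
The plan is to count Whitney cubes $Q\in\mathcal W_m(D)$ that are contained in the ball $B(\omega,R)$ by using their separation together with condition (T1). First I would recall that each such cube $Q$ has side length $2^{-m}$, hence $\diam(Q)=\sqrt d\,2^{-m}$, and by \eqref{dist_est} satisfies $\dist(Q,\partial D)\le 4\diam(Q)=4\sqrt d\,2^{-m}$; consequently every point of $Q$ lies within distance $5\sqrt d\,2^{-m}$ of $\partial D$. Pick for each $Q$ a point $\zeta_Q\in\partial D$ with $\dist(\zeta_Q,Q)=\dist(Q,\partial D)$; since $Q\subset B(\omega,R)$ and $\dist(Q,\partial D)\le 4\sqrt d\,2^{-m}\le 4\sqrt d R$, the point $\zeta_Q$ lies in $B(\omega,R')\cap\partial D$ with $R':=R+4\sqrt d R\le 5\sqrt d R$ (using $d\ge 1$; one may be slightly more careful but any such absolute constant works, and the constant $14\sqrt d$ in the statement leaves ample room).

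Next I would apply (T1) with this $R'$ and with $r:=2^{-m}$ (legitimate since $0<2^{-m}\le R\le R'$): there is a cover of $B(\omega,R')\cap\partial D$ by balls $B(\xi_\ell,2^{-m})$, $\ell=1,\dots,N$, centred on $\partial D$, with $N\le C(R'/2^{-m})^\lambda\le C(5\sqrt d)^\lambda (R/2^{-m})^\lambda$. Each $\zeta_Q$ lies in some $B(\xi_{\ell(Q)},2^{-m})$. The key point is a bounded-overlap argument: I claim that for a fixed $\ell$, the number of cubes $Q\in\mathcal W_m(D;B(\omega,R))$ with $\ell(Q)=\ell$ is bounded by an absolute constant depending only on $d$. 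Indeed, if $\ell(Q)=\ell$ then $Q$ is contained in the ball $B(\xi_\ell,\,2^{-m}+5\sqrt d\,2^{-m})\subset B(\xi_\ell,(1+5\sqrt d)2^{-m})$; since the cubes in $\mathcal W_m(D)$ have pairwise disjoint interiors and each has volume $2^{-md}$, a volume comparison gives at most $\big((1+5\sqrt d)2^{-m}\big)^d 2^{md}\cdot v_d = (1+5\sqrt d)^d v_d$ such cubes, where $v_d$ is the volume of the unit ball; this is an absolute constant. Summing over $\ell$ then yields
\[
\sharp\mathcal W_m(D;B(\omega,R))\le (1+5\sqrt d)^d v_d\cdot N\le C\,(1+5\sqrt d)^d v_d\,(5\sqrt d)^\lambda\Big(\frac{R}{2^{-m}}\Big)^\lambda,
\]
and one checks that the numerical prefactor is dominated by $C(14\sqrt d)^{d+\lambda}$ for all $d\ge 1$ (since $1+5\sqrt d\le 14\sqrt d$ and $5\sqrt d\le 14\sqrt d$, using $\sqrt d\ge 1$), which is exactly the claimed bound.

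I do not expect a genuine obstacle here; the proof is a routine covering-plus-volume estimate, and the only thing requiring a little care is bookkeeping the geometric constants so that everything fits under $14\sqrt d$ — in particular tracking the enlargement factor from passing from $Q\subset B(\omega,R)$ to $\zeta_Q\in B(\omega,R')$ and then absorbing it, and making sure the crude inequalities $1+5\sqrt d\le 14\sqrt d$ and $5\sqrt d\le 14\sqrt d$ are valid (they are, since $d\ge 1$). An alternative to invoking (T1) on the enlarged ball would be to cover $B(\omega,R)\cap\partial D$ directly and argue that every cube $Q\subset B(\omega,R)$ with $\dist(Q,\partial D)\le 4\sqrt d\,2^{-m}$ has its associated boundary point within $B(\omega,R)$ only after a mild enlargement, so the enlargement is unavoidable but harmless; I would go with the version above as it is cleanest. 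The lemma then feeds into the proof of Proposition \ref{p.thin} by bounding $\sharp V$-type sums, in direct analogy with \cite[Lemma 4.3]{KLV}.
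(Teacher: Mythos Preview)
Your proof is correct and follows essentially the same approach as the paper: pick a nearby boundary point for each cube, cover the boundary inside a slightly enlarged ball by balls of radius $2^{-m}$ via (T1), and bound the number of Whitney cubes associated to each covering ball by a volume comparison. The only cosmetic differences are that the paper uses the midpoint $x_Q$ (and its nearest boundary point $y_Q$) rather than a nearest boundary point to the whole cube, and it enlarges to $B(\omega,6\sqrt d\,R)$ with $Q\subset B(\omega_j,7\sqrt d\,2^{-m})$, leading directly to the constant $(14\sqrt d)^{d+\lambda}$ without an explicit $v_d$.
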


\begin{proof}
Suppose $B_1,\ldots,B_N$ is a cover of  $B(\omega,6\sqrt dR)\cap \partial D$ by balls $B_j=B(\omega_j,2^{-m})$
that are
centred in $\partial D$, see condition (T1). Consider a cube $Q\in\mathcal{W}_m(D;B(\omega,R))$,
and fix a point $y_Q\in\partial D$ such that $\lvert x_Q-y_Q\rvert = \mathrm{dist}(x_Q,\partial D)$. Here
$x_Q$ denotes the midpoint of $Q$.
By inequalities \eqref{dist_est} and the fact that $Q\subset B(\omega,R)$,
\begin{align*}
\lvert y_Q -\omega\rvert \le \lvert y_Q-x_Q\rvert + \lvert x_Q-\omega\rvert < 5\mathrm{diam}(Q)+R \le 6\sqrt d R\,.
\end{align*}
By the covering property, there is $j=j(Q)$ such that $y_Q\in B_j$. 
We can infer that
\[\mathcal{W}_m(D;B(\omega,R)) = \bigcup_{j=1}^N \mathcal{Q}_j\,,\] where
 $\mathcal{Q}_j=\{Q\in\mathcal{W}_m(D;B(\omega,R))\,:\, y_Q\in B_j\}$.
Let $Q\in\mathcal{Q}_j$. Then, for every $x\in Q$, 
\begin{align*}
\lvert x-\omega_j \rvert &\le \lvert x-x_Q\rvert + \lvert x_Q-y_Q\rvert + \lvert y_Q-\omega_j\rvert\\
&< \mathrm{diam}(Q) + \mathrm{dist}(x_Q,\partial D) + 2^{-m} \\&\le 7\sqrt d 2^{-m}\,.
\end{align*}
Since the interiors of cubes in the family $\mathcal{Q}_j$ are disjoint, there are at most
\[
\frac{\lvert B(\omega_j, 7\sqrt d 2^{-m})\rvert}{ 2^{-md}} \le (14 \sqrt d)^d
\]
cubes in this family. Hence,
\begin{align*}
\sharp \mathcal{W}_m(D;B(\omega,R)) &\le \sum_{j=1}^N \sharp \mathcal{Q}_j \le (14\sqrt d)^d N 
\le C\bigg(\frac{6\sqrt d R}{2^{-m}}\bigg)^\lambda \cdot (14 \sqrt d)^d\,.
\end{align*}
This concludes the proof.
\end{proof}

We are ready to prove the main result in this section.

\begin{proof}[Proof of Proposition \ref{p.thin}]
The properties (B1) and (B2) are clear. 
In order to verify condition (B3),
let us fix $k>0$ and a cube $B^{(n)}_j$.
Consider
$i\in V(B^{(n)}_j,k)$, and points $x\in B^{(n)}_j$ and $z\in B^{(n+k)}_i$. Then,
by the construction above,
\begin{align*}
\lvert z-x\rvert &\le \lvert z-z_j^{(n)}\rvert + \lvert z_j^{(n)} - y_j^{(n)}\rvert + \lvert y_j^{(n)}- x_j^{(n)}\rvert + \lvert x_j^{(n)}-x\rvert\\
&\le \mathrm{diam}(B^{(n+k)}_i) + 2^{n+k} + \mathrm{dist}(x_j^{(n)}, \partial D) + \mathrm{diam}(B^{(n)}_j)
<  8\sqrt d \tau^{1/d} 2^{n+k}\,.
\end{align*}
This is condition (B3). 

In order to verify the last condition (B4), we fix  cubes $B^{(n+k)}_i$
and $B^{(n)}_j$ such that $i\in V(B^{(n)}_j, k)$. Then $\sharp V(B^{(n)}_j,k)=1$.
Moreover,
\[
B_j^{(n)}\subset B(y_i^{(n+k)}, 13\sqrt d \tau^{1/d} 2^{n+k})\,.
\]
Indeed, for any $x\in B^{(n)}_j$,
\[
\lvert x-y_{i}^{(n+k)}\rvert \le \lvert x-x_i^{(n+k)}\rvert + \lvert x_i^{(n+k)} - y_i^{(n+k)}\rvert < 13\sqrt d \tau^{1/d} 2^{n+k}\,.
\]
We still need another auxiliary estimate, namely, if $m\in\Z$ is such that $B^{(n)}_j\in\mathcal{W}_m(D)$, then
$\tau^{-1/d}\le 2^{m+n}\le \tau^{1/d}$. We can finally proceed as follows
\begin{align*}
\sum_{j:i\in V(B^{(n)}_j,k)} \frac{1}{\sharp V(B^{(n)}_j,k)} &= \sharp \{j\,:\, i\in V(B^{(n)}_j,k)\}\\
&= \sum_{m} \sharp \{j\,:\, i\in V(B^{(n)}_j,k)\text{ and } B^{(n)}_j\in\mathcal{W}_m(D)\}\\
&\le \sum_{m} \sharp \mathcal{W}_m(D;  B(y_i^{(n+k)},13\sqrt d \tau^{1/d} 2^{n+k}))\\
&\le \sum_{m} C (14\sqrt d)^{d+\lambda}\bigg(\frac{13\sqrt d \tau^{1/d} 2^{n+k}}{2^{-m}}\bigg)^\lambda\,,
\end{align*}
where $m$ ranges over indices $-n-\log_2 \tau^{1/d} \le m \le -n+\log_2 \tau^{1/d}$. 
This yields condition (B4).
\end{proof}


\def\cprime{$'$}

\end{document}